\newcommand\where{; \allowbreak \nonscript\; \mathopen{}}
\DeclarePairedDelimiterX\set[1]{\lbrace}{\rbrace}{\nonscript\,  #1 \nonscript\,}
\newcommand\restr[2]{{\left.\kern-\nulldelimiterspace #1 \right\rvert_{#2}}}
\newcommand\restrr[2]{{\kern-\nulldelimiterspace #1 \rvert_{#2}}}
\newcommand{\ol}[1]{\overline{#1}}
\newcommand{\lp}{\left(}
\newcommand{\rp}{\right)}
\newcommand{\lb}{\left[}
\newcommand{\rb}{\right]}
\newcommand{\ldb}{\llbracket}
\newcommand{\rdb}{\rrbracket}
\newcommand{\lv}{\lvert}
\newcommand{\rv}{\rvert}
\newcommand{\lV}{\lVert}
\newcommand{\rV}{\rVert}
\newcommand{\llV}{\left\lV}
\newcommand{\rrV}{\right\rV}
\newcommand\bff{\bm{f}}
\newcommand\bg{\bm{g}}
\newcommand\bp{\bm{p}}
\newcommand\bq{\bm{q}}
\newcommand\br{\bm{r}}
\newcommand\bu{\bm{u}}
\newcommand\bv{\bm{v}}
\newcommand\bx{\bm{x}}
\newcommand\bzero{\bm{0}}
\newcommand\bmu{\bm{\mu}}
\newcommand\brho{\bm{\rho}}
\newcommand\bbR{\mathbb{R}}
\newcommand\cA{\mathcal{A}}
\newcommand\cB{\mathcal{B}}
\newcommand\cE{\mathcal{E}}
\newcommand\cF{\mathcal{F}}
\newcommand\cI{\mathcal{I}}
\newcommand\cO{\mathcal{O}}
\newcommand\cP{\mathcal{P}}
\newcommand\cT{\mathcal{T}}
\newcommand\cU{\mathcal{U}}
\newcommand\tcE{\tilde{\cE}}
\newcommand\ff{\mathfrak{f}}
\newcommand\fA{\mathfrak{A}}
\newcommand\hr{\hat{r}}
\newcommand\hv{\hat{v}}
\newcommand\hB{\hat{B}}
\newcommand\bhr{\bm{\hr}}
\newcommand\bhv{\bm{\hv}}
\newcommand\ta{\tilde{a}}
\newcommand\tu{\tilde{u}}
\newcommand\tv{\tilde{v}}
\newcommand\tB{\tilde{B}}
\newcommand\btu{\bm{\tu}}
\newcommand\btv{\bm{\tv}}
\newcommand\half{\frac{1}{2}}
\newcommand\mone{{-1}}
\newcommand\col{\colon}
\newcommand\dd{\mathop{}\!\mathrm{d}}
\newcommand\grad{\nabla}
\let\div\relax\DeclareMathOperator{\div}{div}
\DeclareMathOperator{\diag}{diag}
\DeclareMathOperator{\Ker}{Ker}
\DeclareMathOperator{\Range}{Range}
\DeclareMathOperator{\myspan}{span}
\DeclareMathOperator{\NNZ}{NNZ}
\newtheorem{theorem}{Theorem}
\newtheorem{lemma}[theorem]{Lemma}
\newtheorem{proposition}[theorem]{Proposition}
\newtheorem{corollary}[theorem]{Corollary}
\theoremstyle{remark}
\newtheorem{remark}[theorem]{Remark}
\numberwithin{theorem}{section}
\numberwithin{equation}{section}
\title[Preconditioning via a Mortar-based Approach]{A Condensed Constrained Nonconforming Mortar-based Approach for Preconditioning Finite Element Discretization Problems}
\author{Delyan Z. Kalchev} 
\address{Center for Applied Scientific Computing, Lawrence Livermore National Laboratory, P.O. Box 808, L-561, Livermore, CA 94551, USA.}
\email{kalchev1@llnl.gov}
\author{Panayot S. Vassilevski} 
\address{Department of Mathematics and Statistics, Portland State University, Portland, OR 97207, USA, and  Center for Applied Scientific Computing, Lawrence Livermore National Laboratory, P.O. Box 808, L-561, Livermore, CA 94551, USA.}
\email{panayot@pdx.edu, vassilevski1@llnl.gov}
\thanks{This work was performed under the auspices of the U.S. Department of Energy by Lawrence Livermore National Laboratory under contract DE-AC52-07NA27344 (LLNL-JRNL-798915).}
\thanks{The work of the second author was partially supported by NSF under grant DMS-1619640.}
\newcommand\Element{\textsc{element}}
\newcommand\Face{\textsc{face}}
\newcommand\Entity{\textsc{entity}}
\newcommand\Elements{\textsc{elements}}
\newcommand\Faces{\textsc{faces}}
\newcommand\Entities{\textsc{entities}}
\newcommand\Edofs{\textsc{e}dofs}
\newcommand\Bdofs{\textsc{b}dofs}
\newcommand\Adofs{\textsc{a}dofs}
\begin{document}

\begin{abstract}
This paper presents and studies an approach for constructing auxiliary space preconditioners for finite element problems using a constrained nonconforming reformulation, that is based on a proposed modified version of the mortar method. The well-known mortar finite element discretization method is modified to admit a local structure, providing an element-by-element or subdomain-by-subdomain assembly property. This is achieved via the introduction of additional trace finite element spaces and degrees of freedom (unknowns) associated with the interfaces between adjacent elements or subdomains. The resulting nonconforming formulation and a  reduced via static condensation Schur complement form on the interfaces are used in the construction of auxiliary space preconditioners for a given conforming finite element discretization problem. The properties of these preconditioners are studied and their performance is illustrated on model second order scalar elliptic problems utilizing high order elements.

\smallskip
\noindent \textsc{Key words.} finite element method, auxiliary space, fictitious space, preconditioning, mortar method, static condensation, algebraic multigrid, element-by-element assembly, high order

\smallskip
\noindent \textsc{Mathematics subject classification.} 65N30, 65N22, 65N55, 65F08
\end{abstract}

%\date{\today}
\maketitle

\section{Introduction}

The well-known \emph{mortar} finite element discretization method (see, e.g., \cite{DDDiscretizationSolvers,2005Mortar,1999Mortar,1993MortarDD,2001MortarMultipliers}) penalizes jumps across adjacent elements via constraints. This couples degrees of freedom across two neighboring elements and, consequentially, the mortar method does not admit the element-by-element assembly property. In contrast, that property is intrinsic to conforming finite element formulations and it is useful, e.g., in ``matrix-free'' computations since it reduces the coupling across elements. Moreover, the local structure, providing the element-by-element assembly, allows for the utilization of certain element-based coarsening methods, e.g., AMGe (element-based algebraic multigrid) methods \cite{VassilevskiMG} such that an analogous assembly structure is also maintained on coarse levels. Here, based on the simple idea in \cite{IPpreconditioner} of introducing a dedicated space on the element interfaces, the coupling across element boundaries is removed and a convenient local structure, admitting the element-by-element assembly property, is obtained for a modified mortar formulation.

The modification used in this paper is founded upon the following generic idea. The mortar formulation originally employs interface jump constraints and respective Lagrangian multipliers $\mu_\ff$, leading to a jump term $\int_\ff \mu_\ff \ldb u \rdb \dd\rho$ in the resulting Lagrangian functional for each interface $\ff$ between every two adjacent elements $\tau_-$ and $\tau_+$. The proposed modification is to replace this term by two alternative terms, introducing additional interface unknowns $u_\ff$ and other ``one-sided'' jump constraints associated with each $\ff$, leading to Lagrangian multipliers $\mu_{{\tau_-},\ff}$ and $\mu_{{\tau_+},\ff}$. This provides $\int_\ff \mu_{{\tau_-},\ff} (u_- - u_\ff)\dd\rho + \int_\ff \mu_{{\tau_+},\ff} (u_+ - u_\ff)\dd\rho$, where $u_-$ comes from the element $\tau_-$ and $u_+$ from $\tau_+$. Thus, $u_\ff$ represents a trace of the solution on the interface space associated with $\ff$. Clearly, eliminating the unknowns $u_\ff$ recovers the original jump constraints with $\mu_\ff = \mu_{{\tau_-},\ff} = -\mu_{{\tau_+},\ff}$. An important consequence of introducing the additional space of discontinuous (from face to face) functions of the kind $u_b = (u_\ff)$, i.e., piecewise defined functions on the interfaces $\set{\ff}$, is the ability to uniquely relate each of the two new terms with one of the neighboring elements; that is,
$\int_\ff \mu_{{\tau_-},\ff} (u_- - u_\ff)\dd\rho$ with $\tau_-$ and $\int_\ff \mu_{{\tau_+},\ff} (u_+ - u_\ff)\dd\rho$ with $\tau_+$. Accordingly, cross-element coupling occurs only through these new interface unknowns.

The approach exploited in this work to construct preconditioners for a given conforming discretization (an initial formulation with no jump terms involved) is to further replace $\tau_-$ and $\tau_+$ by subdomains $T_-$ and $T_+$ and $\ff$ by the interface $F$ between $T_-$ and $T_+$. The subdomains $\set{T}$ can be viewed as forming a coarse triangulation $\cT^H$ when each $T \in \cT^H$ is a union of elements from an initial fine-scale triangulation $\cT^h$. Any such subdomain $T$ is referred to as an \emph{agglomerate element} or an {\em agglomerate} in short. The resulting modified mortar method employs a pair of discontinuous spaces: one space of functions of the kind $u_e = (u_T)$, i.e., piecewise defined functions on the agglomerates $T \in \cT^H$, and a second space of functions of the kind $u_b = (u_F)$, i.e., piecewise defined functions on the interfaces $\set{F}$ between any two neighboring $T_-$ and $T_+$ in $\cT^H$. This, excluding any forcing terms, leads to a resulting Lagrangian functional with local terms $\half a_T(u_T,v_T) + \sum_{F \subset \partial T} \int_F \mu_{T,F} (u_T - u_F)\dd\rho$ associated with each $T$, where $a_T(\cdot,\cdot)$ is the local version on $T$ of the original symmetric positive definite (SPD) bilinear form $a(\cdot,\cdot)$ from the given conforming discretization. The bilinear form and the respective linear algebra equations of the modified mortar method, possessing the desired local structure, are obtained in a standard way from the problem of finding a saddle point of the Lagrangian functional. This bilinear form and a reduced Schur complement variant of it are utilized in the construction of preconditioners for the original conforming bilinear form $a(\cdot,\cdot)$. Importantly, as shown in this work, while the constrained mortar-based reformulation leads to an indefinite ``saddle-point problem'', the obtained preconditioners are SPD, leading to more natural analysis of their properties and the application of the conjugate gradient method. Moreover, the Schur complement from the reduced via static condensation form is also SPD, allowing the utilization of the abundantly available solvers and preconditioners for systems with SPD matrices like multigrid methods.

The main contribution of the present paper is the introduction and study of a modified mortar reformulation as a technique for obtaining preconditioners for the original conforming bilinear form, utilizing the auxiliary space approach going back to S. Nepomnyaschikh (see \cite{2007DD}) and studied in detail by J. Xu \cite{1996AuxiliarySpace}. Both additive and multiplicative variants of the auxiliary space preconditioners are studied in combination with generic smoothers following the abstract theory in \cite[Theorem 7.18]{VassilevskiMG} by verifying the assumptions stated there. The modified mortar form admits static condensation. Namely, the $u_T$ unknowns and the Lagrangian multipliers can be eliminated using that they are decoupled from each other across elements or subdomains in the modified formulation, obtaining a reduced problem only for the interface unknowns $u_F$. A further advantage of the element-by-element or subdomain-by-subdomain assembly property of the condensed modified mortar formulation is the applicability of the spectral AMGe approach, cf. \cite{2012SAAMGe}, for building algebraic multigrid (AMG) preconditioners for the reduced mortar bilinear form on the interface space which can be viewed as a Schur complement of the full modified mortar form. These auxiliary space preconditioners are implemented and their theoretically shown mesh-independent performance is demonstrated on a scalar second order elliptic problem, including examples with high order elements.

The rest of the paper is organised as follows. \Cref{sec:basic} outlines basic concepts, notation, finite element spaces, and a model problem of interest. The modified mortar approach is introduced in \cref{sec:mortar}, the resulting auxiliary space preconditioners are described in \cref{sec:auxprec}, and \cref{sec:analysisexact} is devoted to the analysis of these preconditioners, showing (\cref{thm:fictopt} and \cref{cor:spectequiv}) the general optimality of a fine-scale auxiliary space preconditioning approach utilizing the mortar reformulation. \Cref{sec:condensation} presents the reduced form and demonstrates (\cref{thm:scopt}) its ability to provide an optimal preconditioning strategy. Numerical results are shown in \cref{sec:numerical}. In the end, \cref{sec:conclusion} provides conclusions and possible future work.

\section{Basics}
\label{sec:basic}

This section is devoted to providing foundations. Notation and abbreviations are introduced to simplify the presentation in the rest of the paper.

\subsection{Mesh and agglomeration}
\label{ssec:meshagg}

A domain $\Omega \subset \bbR^d$ (of dimension $d$) with a Lipschitz boundary, a fine-scale triangulation $\cT^h = \set{\tau}$ of $\Omega$, and a finite element space $\cU^h$ on $\cT^h$ are given. The mesh $\cT^h$ provides a set of elements and respective associated faces, where a \emph{face} is the interface of dimension $d-1$ between two adjacent elements. The focus of this paper is on $\cU^h$ consisting of continuous piecewise polynomial functions equipped with the usual nodal dofs (degrees of freedom). In the rest of the paper, ``$h$'' is used to designate fine-scale entities, whereas ``$H$'' indicates coarse-scale ones.

Let $\cT^H = \set{T}$ be a partitioning of $\cT^h$ into non-overlapping connected sets of fine-scale elements called \emph{agglomerate elements} or simply \emph{agglomerates}; see \cref{fig:aggs}. In general, $\cT^H$ can be obtained by partitioning the \emph{dual graph} of $\cT^h$ --- a graph whose nodes are the elements in $\cT^h$ and any two nodes are connected by an edge in the graph when the respective mesh elements share a face. That is, all $T \in \cT^H$ are described in terms of the elements $\tau \in \cT^h$. In the rest of the paper, capitalization indicates agglomerate entities in $\cT^H$ like {\Element} (short for ``agglomerate element''), \Face, or \Entity, whereas regular letters indicate fine-scale entities in $\cT^h$ like element, face, or entity.

Viewing the {\Elements} in $\cT^H$ as collections of respective fine-scale faces, an intersection procedure over these collections constructs the agglomerate {\Faces} in $\cT^H$ as sets of faces; cf. \cref{fig:faces}. Consequently, each {\Face} can be consistently recognized as the $(d-1)$-dimensional surface that serves as an interface between two adjacent {\Elements} in $\cT^H$. The set of obtained {\Faces} in $\cT^H$ is denoted by $\Phi^H = \set{F}$. Also, the respective sets of $\cU^h$ dofs that can be associated with elements, faces, {\Elements}, and {\Faces} are available. 

For additional information on agglomeration and the topology of ``coarse meshes'' like $\cT^H$, see \cite{VassilevskiMG,2002AMGeTopology}.

\begin{figure}
\centering
\subfloat[][{{\Elements} in 3D}]{\includegraphics[width=0.43\textwidth]{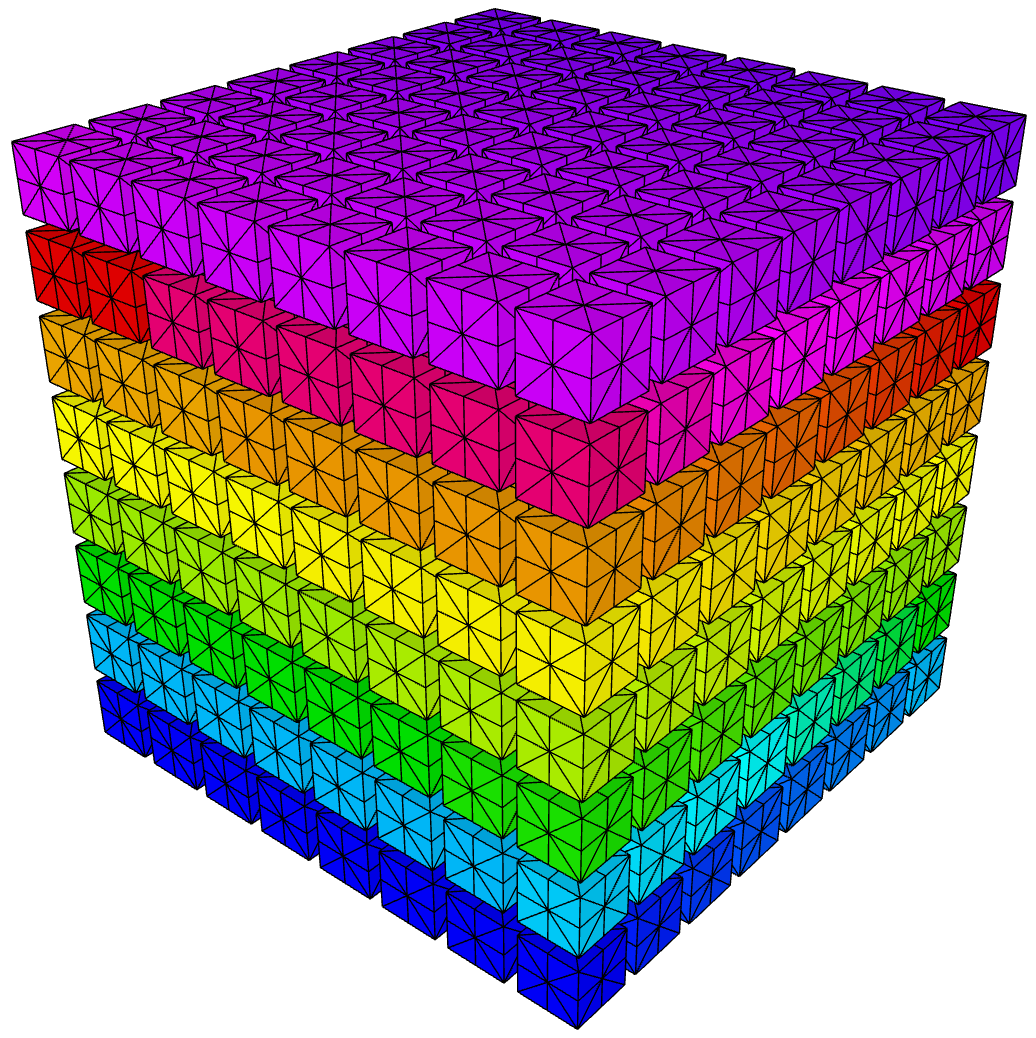}\label{fig:ELEMENTS}}\quad
\subfloat[][{{\Elements} in 2D}]{\includegraphics[width=0.35\textwidth]{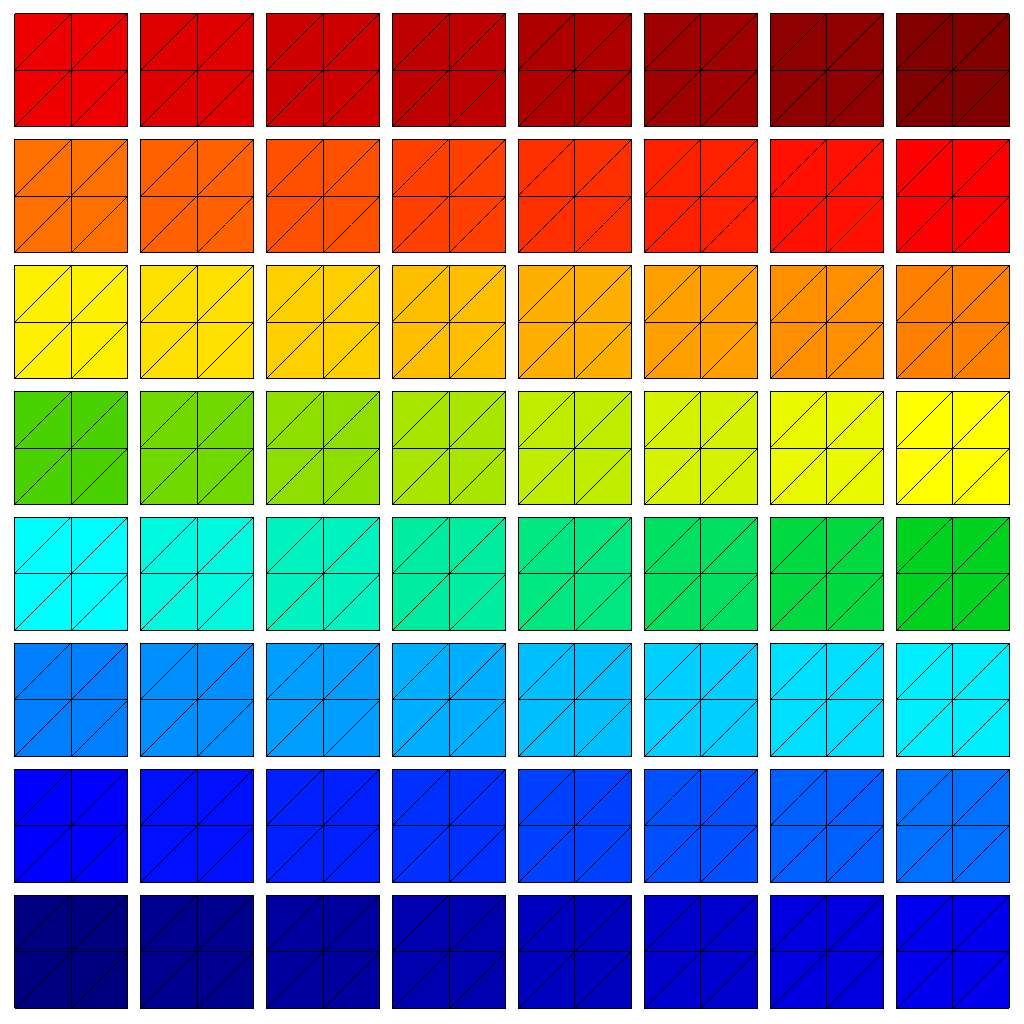}\label{fig:ELEMENTS2D}}
\caption[]{Examples of agglomerates (designated as {\Elements}) of fine-scale elements, utilized in the modified mortar reformulation.}\label{fig:aggs}
\end{figure}

\subsection{Nonconforming spaces}
\label{ssec:spaces}

\begin{figure}
\centering
\includegraphics[width=0.50\textwidth]{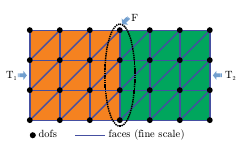}
\caption[]{An illustration of the designation of a {\Face} as a set of fine-scale faces, serving as an interface between {\Elements}.}\label{fig:faces}
\end{figure}

\begin{figure}
\centering
\includegraphics[width=1.00\textwidth]{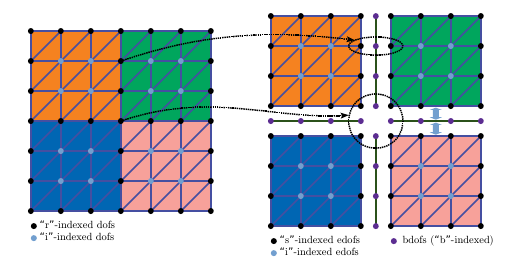}
\caption[]{An illustration of the construction of fine-scale nonconforming finite element spaces $\cE^h$ and $\cF^h$ from a conforming space $\cU^h$, utilizing agglomeration that provides {\Elements} and {\Faces}. Note that the only inter-{\Entity} coupling in the modified mortar reformulation is between {\Elements} and their respective {\Faces}.}\label{fig:nonconfsplitting}
\end{figure}

A main idea in this work is to obtain discontinuous (nonconforming) finite element spaces and formulations on $\cT^H$ and $\Phi^H$. To that purpose, define the finite element spaces $\cE^h$ and $\cF^h$ via restrictions or traces of functions in $\cU^h$ onto $T \in \cT^H$ and $F \in \Phi^H$ respectively. Namely,
\begin{alignat*}{2}
\cE^h &= \Big\lbrace\, v_e^h \in L^\infty(\Omega) \where\;\; &&\forall\, T\in \cT^H,\; \exists\, v^h \in \cU^h\col\;\, \restr{v_e^h}{T} = \restr{v^h}{T} \,\Big\rbrace,\\
\cF^h &= \Big\lbrace\,v_b^h \in L^\infty(\cup_{F\in\Phi^H}F) \where\;\; &&\forall\, F\in \Phi^H,\; \exists\, v^h \in \cU^h\col\;\, \restr{v_b^h}{F} = \restr{v^h}{F} \,\Big\rbrace.
\end{alignat*}
Note that $\cE^h$ and $\cF^h$ are fine-scale spaces despite the utilization of agglomerate mesh structures like $\cT^H$ and $\Phi^H$. Accordingly, the bases in $\cE^h$ and $\cF^h$ are derived via respective restrictions or traces of the basis in $\cU^h$. The degrees of freedom in $\cE^h$ and $\cF^h$ are obtained in a corresponding manner from the dofs in $\cU^h$ as illustrated in \cref{fig:nonconfsplitting}. For simplicity, ``dofs'' is reserved for the degrees of freedom in $\cU^h$, whereas ``edofs'' and ``bdofs'' are reserved for $\cE^h$ and $\cF^h$ respectively. Moreover, ``adofs'' designates the edofs and bdofs collectively and is associated with the product space $\cE^h\times\cF^h$. In more detail, edofs and bdofs are obtained by ``cloning'' all respective dofs for every agglomerate {\Entity} that contains the dofs. Hence, each {\Entity} receives and it is the sole owner of a copy of all dofs it contains and there is no intersection between {\Entities} in terms of edofs and bdofs, i.e., they are completely separated without any sharing, making $\cE^h$ and $\cF^h$ spaces of discontinuous functions. Nevertheless, dofs, edofs, and bdofs are related via their common ``ancestry'' founded on the above ``cloning'' procedure. Thus, the restrictions or traces of vectors or finite element functions in one of the spaces and their representations as vectors or functions in some of the other spaces is seen and performed in a purely ``algebraic'' context. For example, the meaning of $\restr{\bq}{F}$, where $\bq$ is a vector in terms of the edofs of some $T \in \cT^H$ and $F \subset \partial T$, is natural as a vector in terms of the bdofs of $F \in \Phi^H$. This is unambiguous and should lead to no confusion as it only involves a subvector and an appropriate index mapping. In what follows, finite element functions are identified with vectors on the degrees of freedom in the respective spaces.

The portions of vectors corresponding to edofs and bdofs are respectively indexed by ``$e$'' and ``$b$'', leading to the notation $\bhv^T = [\bhv_e^T, \bhv_b^T]^T$ for $\bhv \in \cE^h \times \cF^h$, where $\bhv_e \in \cE^h$ and $\bhv_b \in \cF^h$. By further splitting $\bhv_e^T = [\bhv_i^T, \bhv_s^T]^T$, it is obtained $\bhv^T = [\bhv_e^T, \bhv_b^T]^T = [\bhv_i^T, \bhv_s^T, \bhv_b^T]^T$, where ``$i$'' denotes the edofs in the interiors of all $T \in \cT^H$ and ``$s$'' are the edofs that can be mapped to some bdofs based on the previously described procedure of ``cloning'' dofs into edofs and bdofs. Analogously for $\bv \in \cU^h$, the splitting $\bv^T = [\bv_i^T, \bv_r^T]^T$ is introduced in terms of indexed ``$i$'' dofs in the interiors of all $T \in \cT^H$ and indexed ``$r$'' dofs related to bdofs; see \cref{fig:nonconfsplitting}. Note that there is a clear difference between ``$r$'' and ``$s$'' but also a clear relation. Locally on $T \in \cT^H$, ``$r$'' and ``$s$'' can in fact be equated, but it is necessary to distinguish between ``$r$'' and ``$s$'' indices in a global setting. This should not cause any ambiguity below.

Coarse subspaces $\cE^H \subset \cE^h$ and $\cF^H \subset \cF^h$ can be constructed by respectively selecting linearly independent vectors $\set{\bq_{T,i}}_{i=1}^{m_T}$ for every $T \in \cT^H$ and $\set{\bq_{F,i}}_{i=1}^{m_F}$ for every $F \in \Phi^H$, forming the bases for the coarse spaces. This is an ``algebraic'' procedure formulating the coarse basis functions as linear combinations of fine-level basis functions, i.e., as vectors in terms of the fine-level degrees of freedom. The basis vectors are organized  appropriately as columns of \emph{prolongation} (or \emph{interpolation}) matrices $\cP_e\col \cE^H \mapsto \cE^h$ and $\cP_b\col \cF^H \mapsto \cF^h$, forming $\cP\col \cE^H \times \cF^H \mapsto \cE^h \times \cF^h$ as $\cP = \diag(\cP_e, \cP_b)$. For consistency, the corresponding degrees of freedom associated with the respective coarse basis vectors in $\cE^H$ and $\cF^H$ are respectively called ``\Edofs'' and ``\Bdofs'', and the collective term ``\Adofs'' is associated with $\cE^H\times\cF^H$. In essence, this is based on the ideas in AMGe methods \cite{VassilevskiMG,2002AMGeTopology,2003AMGe,2007AMGe,2008AMGe,2012SAAMGe}. 

\subsection{Model problem}

The model problem considered in this paper is the second order scalar elliptic partial differential equation (PDE)
\begin{equation}\label{eq:pde}
-\div(\kappa\grad u) = f(\bx) \text{ in } \Omega,
\end{equation}
where $\kappa \in L^\infty(\Omega)$, $\kappa > 0$, is a given permeability field, $f \in L^2(\Omega)$ is a given source, and $u \in H^1(\Omega)$ is the unknown function. For simplicity of exposition, the boundary condition $u = 0$ on $\partial \Omega$, the boundary of $\Omega$, is considered, i.e., $u \in H^1_0(\Omega)$. The ubiquitous variational formulation,
\begin{equation}\label{eq:H1min}
\min_{v \in H^1_0(\Omega)} \lb a(v,v) - 2 (f, v) \rb,
\end{equation}
of \eqref{eq:pde} is utilized, providing the weak form
\begin{equation}\label{eq:weakform}
\text{Find } u \in H^1_0(\Omega)\col \; a(u,v) = (f, v), \quad\forall v \in H^1_0(\Omega),
\end{equation}
where $(\cdot,\cdot)$ denotes the inner products in both $L^2(\Omega)$ and $[L^2(\Omega)]^d$, and $a(u,v)=(\kappa \grad u, \grad v)$ for $u,v \in H^1_0(\Omega)$. Consider the fine-scale finite element space $\cU^h \subset H^1_0(\Omega)$ defined on $\cT^h$. Using the finite element basis in $\cU^h$, \eqref{eq:weakform} induces the following linear system of algebraic equations:
\begin{equation}\label{eq:H1linsys}
A \bu = \bff,
\end{equation}
for the global SPD \emph{stiffness matrix} $A$. Moreover, the local on agglomerates $T \in \cT^H$ symmetric positive semidefinite (SPSD) stiffness matrices $A_T$ are obtainable such that (s.t.) $A = \sum_{T \in \cT^H}A_T$ (the summation involves an implicit local-to-global mapping).

\section{Constrained mortar-based formulation}
\label{sec:mortar}

The preconditioners proposed in this paper are based on the ideas of a \emph{mortar} method \cite{DDDiscretizationSolvers,DDTheoryAlgorithms}. The space pair $\cE^h\times \cF^h$ with its degrees of freedom (adofs) is employed together with the ability to construct subspace pairs $\cE^H\times \cF^H$ by selecting basis functions as vectors expressed in terms of the adofs in $\cE^h\times \cF^h$; see the end of \cref{ssec:spaces}. Having the spaces determined, a mortar-based approach is introduced here, where the jumps across interfaces of {\Elements} are penalized via equality constraints. This provides a generic modified mortar reformulation of the original problem, which is utilized as a preconditioner in an auxiliary space framework. In this section, the mortar-based formulation is presented and discussed. In the sections that follow, it is further applied to construct auxiliary space preconditioners for \eqref{eq:H1linsys}, their properties are addressed, and a block-preconditioning technique based on static condensation for the constrained mortar-type problem is described.

Using the prolongation operators defined in the end of \cref{ssec:spaces} and the local on $T$ versions $A_T$ of the fine-scale matrix $A$ in \eqref{eq:H1linsys}, consider the discrete nonconforming constrained quadratic minimization reformulation of \eqref{eq:H1min}
\small
\begin{equation}\label{eq:mortarmin}
\begin{split}
\min &\sum_{T\in\cT^H} \lb [\restr{(\cP_e\bv)}{T}]^T A_T \,\restr{(\cP_e\bv)}{T} - 2 [\restr{(\cP_e\bv)}{T}]^T \bff_T \rb,\\
\text{subject to }\, &Q_{F}\lb\restr{\lp\restr{(\cP_e\bv)}{T}\rp}{F}\rb - \restr{(\cP_b\brho)}{F}  = \bzero, \quad \forall F\in\Phi^H\text{ and }\forall T \in \cT^H \text{ s.t. } \partial T \supset F,
\end{split}
\end{equation}
\normalsize
for $[\bv,\brho] \in \cE^H\times \cF^H$. Here, $\bff_T$ are the local on $T$ versions of $\bff$ in \eqref{eq:H1linsys} and $Q_{F}$ are the $D_{F}$-orthogonal projections onto the local on $F$ spaces spanned by the vectors $\set{\bq_{F,i}}_{i=1}^{m_F}$ associated with $F$ and constituting the basis of $\cF^H$, where $D_F$ is the restriction of the diagonal $D$ of the global $A$ in \eqref{eq:H1linsys} onto the bdofs of $F$. For generality and to avoid over-constraining the formulation, the problem is posed directly on a subspace of $\cE^h\times \cF^h$ requiring the explicit use of the prolongators $\cP_e$ and $\cP_b$. Assume that $\cF^H$ contains the local on {\Faces} constants and $\myspan\set{\bq_{F,i}}_{i=1}^{m_F}$ for each $F \in \Phi^H$ is a proper subspace of the trace space on $F$ of the functions in $\cE^H$. Thus, the constraints provide that the jumps vanish only in a subspace on each {\Face}.

Formulation \eqref{eq:mortarmin} induces the respective global and local on $T$ (modified) mortar matrices
\begin{align}\label{eq:cA}
\cA =
\begin{bmatrix}
\cA_{ee} &C^T &\\
C& &-X\\
&-X^T &
\end{bmatrix},\quad
\cA_T =
\begin{bmatrix}
\cA_{T,ee} &C^T_T &\\
C_T& &-X_T\\
&-X^T_T &
\end{bmatrix}.
\end{align}
Here, $\cA_{ee}$ is associated with the bilinear form $\sum_{T\in\cT^H} [\restr{(\cP_e\bv)}{T}]^T A_T \,\restr{(\cP_e\bu)}{T}$, where $\bv,\bu \in \cE^H$ are respectively test and trial vectors. Also, $C$ represents
\[
\sum_{T\in\cT^H}\sum_{F\subset \partial T} [\cP_F\bmu_{T,F}]^T D_{F} \restr{(\restr{(\cP_e\bu)}{T})}{F},
\]
where $\bmu = [\bmu_{T,F}]_{T\in\cT^H,F\subset \partial T} \in [\cF^H]^2$ is a test Lagrangian multiplier vector for the constraint in \eqref{eq:mortarmin}, while $X$ is associated with
\[
\sum_{T\in\cT^H}\sum_{F\subset \partial T} [\cP_F\bmu_{T,F}]^T D_{F}\, \restr{(\cP_b\bp)}{F},
\]
where $\bp \in \cF^H$ is a trial vector for the interface traces. Here, $\cP_F$ denotes the local on $F$ version of $\cP_b$, i.e., it is the matrix with $\set{\bq_{F,i}}_{i=1}^{m_F}$ for columns. The Lagrangian multipliers are associated with the pairs of {\Elements} and corresponding {\Faces} as they enforce equalities involving ``one-sided'' traces. Observe that $\cA_{T,ee}$, $C_T$, and $X_T$ are simply respective sub-matrices of $\cA_{ee}$, $C$, and $X$ since the assembly of $\cA$ from $\cA_T$ involves only copying without any summation. This is due to the fact that all coupling is through the constraints on the {\Faces}, which is represented by the off-diagonal blocks in $\cA$ and no other connections across {\Elements} and {\Faces} exist. Clearly, $\cA$ and $\cA_T$ are generally symmetric indefinite matrices, where $\cA_{ee}$ and $\cA_{T,ee}$ are SPSD, while $X_T$ is square SPD and $X$ has a full column rank.

Furthermore, denote the respective leading $2\times 2$ block sub-matrices of $\cA$ and $\cA_T$ in \eqref{eq:cA} by
\begin{equation}\label{eq:ucA}
\fA =
\begin{bmatrix}
\cA_{ee} &C^T\\
C& \\
\end{bmatrix},\quad
\fA_{T} =
\begin{bmatrix}
\cA_{T,ee} &C^T_T\\
C_T& 
\end{bmatrix},
\end{equation}
where, as noted above, $\fA = \diag(\fA_T)_{T\in\cT^H}$. The following basic but useful result is obtained.

\begin{lemma}\label{lem:invert}
The matrices $\fA_{T}$ and $\fA$ in \eqref{eq:ucA} are invertible and the $(2,2)$ block of $\fA_{T}^\mone$ is symmetric negative semidefinite (SNSD).
\end{lemma}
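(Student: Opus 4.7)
The plan is to use a standard saddle-point argument for both the invertibility and the sign property, relying on the structural assumptions about $\cE^H$, $\cF^H$, and the basis vectors $\set{\bq_{F,i}}$ stated just before the matrices in \eqref{eq:cA} were introduced.

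For the invertibility of $\fA_T$, I would suppose $\fA_T [\bz_1^T, \bz_2^T]^T = \bzero$, giving the two identities $\cA_{T,ee} \bz_1 + C_T^T \bz_2 = \bzero$ and $C_T \bz_1 = \bzero$. Pairing the first with $\bz_1$ and using the second yields $\bz_1^T \cA_{T,ee} \bz_1 = 0$, so by the SPSD property $\bz_1 \in \Ker(\cA_{T,ee})$. Since $A_T$ is the local stiffness matrix of the bilinear form $a(\cdot,\cdot)$ on the connected agglomerate $T$, its kernel corresponds to locally constant functions; these constants survive under $\cP_e$ and, by the standing assumption that $\cF^H$ contains the local face constants, they are reproduced by the projections $Q_F$. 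Consequently the constraint $C_T \bz_1 = \bzero$ forces the face trace of the constant to vanish, hence $\bz_1 = \bzero$. Then $C_T^T \bz_2 = \bzero$, and because the $\set{\bq_{F,i}}$ are linearly independent on each $F\subset \partial T$ and the trace of $\cE^H$ onto $\myspan\set{\bq_{F,i}}$ is surjective, $C_T$ has full row rank, so $\bz_2 = \bzero$. Invertibility of $\fA$ is then immediate from the already noted block-diagonal decomposition $\fA = \diag(\fA_T)_{T \in \cT^H}$.

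For the SNSD property of the $(2,2)$ block of $\fA_T^\mone$, symmetry is automatic since $\fA_T$ is symmetric, so it remains to check the sign. Given an arbitrary $\bb$, let $[\bx^T, \by^T]^T \ceq \fA_T^\mone [\bzero^T, \bb^T]^T$, so that $\cA_{T,ee} \bx + C_T^T \by = \bzero$ and $C_T \bx = \bb$. Pairing the first equation with $\bx$ and substituting the second produces
\[
\bb^T \by \;=\; (C_T \bx)^T \by \;=\; \bx^T C_T^T \by \;=\; -\bx^T \cA_{T,ee} \bx \;\le\; 0,
\]
by the SPSD property of $\cA_{T,ee}$. Since the left-hand side is precisely the quadratic form of the $(2,2)$ block of $\fA_T^\mone$ evaluated at an arbitrary $\bb$, the $(2,2)$ block is SNSD.

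I expect no deep obstacle; the only subtlety is the rigorous bookkeeping needed to conclude $\Ker(\cA_{T,ee}) \cap \Ker(C_T) = \set{\bzero}$ and the full row rank of $C_T$ from the hypotheses on $\cE^H$, $\cF^H$, and the selection of $\set{\bq_{F,i}}$, rather than anything drawn from general saddle-point theory. The rest is a direct calculation with the block system.
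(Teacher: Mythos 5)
Your proposal is correct and follows essentially the same route as the paper: invertibility via $\Ker(\cA_{T,ee}) \cap \Ker(C_T) = \set{\bzero}$ (using that the null space of $A_T$ is the constants, which cannot have vanishing $Q_F$-projections since $\cF^H$ contains the face constants) together with the full row rank of $C_T$ from the non-over-constraining assumption, and then block-diagonality for $\fA$. The only difference is cosmetic: where the paper cites \cite[formula (3.8)]{2005SaddleProblems} for the SNSD property of the $(2,2)$ block, you verify it by the direct computation $\bb^T\by = -\bx^T\cA_{T,ee}\bx \le 0$, which is exactly the argument behind that formula.
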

\begin{proof}
As long as \eqref{eq:mortarmin} is not over-constrained, provided by the spaces utilized here, $C_T^T$ has a full column rank. Thus, any nonzero vector in the null space of $\fA_{T}$ involves a nonzero local $\bu_T$, i.e., $\bu_T = \restr{\bu}{T}$ for some $\bu \in \cE^H$ such that $Q_F [\restr{(\cP_T \bu_T)}{F}] = \bzero$ on all $F\subset \partial T$ and $A_T[\cP_T \bu_T] = \bzero$, where $\cP_T$ denotes the local on $T$ version of $\cP_e$, i.e., it is the matrix with $\set{\bq_{T,i}}_{i=1}^{m_T}$ for columns. Thus, $\fA_{T}$ is singular if and only if $A_T$ has a nonzero null vector in $\Range(\cP_T) = \myspan\set{\bq_{T,i}}_{i=1}^{m_T}$ with vanishing $Q_F$-projections on all {\Faces}, which is not the case here since the null space of $A_T$ is spanned by the constant vector and $\cF^H$ contains the piecewise constants. That is, $\cA_{T,ee}$ and $C_T$ do not share a common nonzero null vector ($\Ker(\cA_{T,ee}) \cap \Ker(C_T) = \set{\bzero}$); cf. \cite{2005SaddleProblems}. Hence, $\fA_{T}$ and $\fA$ are invertible. Finally, owing to \cite[formula (3.8)]{2005SaddleProblems}, it holds that the $(2,2)$ block of $\fA_{T}^\mone$ is SNSD. Particularly, if $\Ker(\cA_{T,ee}) = \set{\bzero}$ (which is always the case when $\Ker(A_T)= \set{\bzero}$), the $(2,2)$ block of $\fA_{T}^\mone$ is SND (symmetric negative definite).
\end{proof}

\begin{remark}
In view of the full rank of $X_T$, $\cA_T$ in \eqref{eq:cA} is invertible if and only if $\Ker(\cA_{T,ee}) = \set{\bzero}$.
\end{remark}

\begin{remark}
For simplicity, the argument in \cref{lem:invert} makes use of the properties of the particular model problem \eqref{eq:pde}. Namely, it utilizes that the possible null space of $A_T$ is spanned by a constant which cannot vanish on any portion of the boundary of the {\Element}. In general, the result in \cref{lem:invert} holds whenever $A_T$ cannot possess nonzero null-space vectors that vanish on the respective {\Faces}. This is always the case when considering problems coming from PDEs for which Dirichlet-type boundary conditions on portions of the boundary lead to nonsingular problems.
\end{remark}

\Cref{lem:invert} allows the introduction of the following local and global Schur complements resulting from the elimination of all respective {\Edofs} and Lagrangian multipliers from $\cA_T$ and $\cA$ in \eqref{eq:cA}:
\begin{equation}\label{eq:Schur}
\Sigma_T = -[O, -X^T_T]\, \fA_T^\mone\, [O, -X^T_T]^T, \quad \Sigma = -[O, -X^T]\, \fA^\mone\, [O, -X^T]^T,
\end{equation}
where the notation in \eqref{eq:ucA} is used. Observe that $\Sigma_T$ is obtainable by performing only local on $T$ computations and $\Sigma$ can be assembled {\Element} by {\Element} from $\Sigma_T$. This is an important property (employed in \cref{sec:condensation}) resulting from the utilization of interface spaces like $\cF^h$ and $\cF^H$ (\cref{ssec:spaces}), and the particular formulation \eqref{eq:mortarmin}. Now, it is not difficult to establish the following corollary.

\begin{corollary}\label{cor:invert}
It holds that $\Sigma_T$ is SPSD, $\Sigma$ is SPD, and $\cA$ in \eqref{eq:cA} is invertible.
\end{corollary}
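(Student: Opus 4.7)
The plan is to deduce all three statements from \cref{lem:invert} together with block algebra and a kernel analysis that exploits the Dirichlet boundary condition of the model problem. For $\Sigma_T$, partition
\[
\fA_T^\mone = \begin{bmatrix} M_{11,T} & M_{12,T} \\ M_{21,T} & M_{22,T} \end{bmatrix}
\]
conformably with \eqref{eq:ucA}; a direct evaluation of \eqref{eq:Schur} then yields $\Sigma_T = X_T^T(-M_{22,T})X_T$, which is SPSD because $-M_{22,T}$ is SPSD by \cref{lem:invert}. Since $\fA = \diag(\fA_T)_{T\in\cT^H}$, one has $\fA^\mone = \diag(\fA_T^\mone)$, and, after grouping the Lagrangian multipliers $\bmu_{T,F}$ by {\Element}, the $(2,2)$ block of $\fA^\mone$ becomes the block-diagonal $\diag(M_{22,T})$. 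Combined with the block-row structure of $X$, this produces the assembly identity $\Sigma = \sum_{T\in\cT^H}\Sigma_T$ (with implicit local-to-global mapping on the {\Bdofs}), so $\Sigma$ is SPSD as a sum of SPSD matrices.

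To upgrade SPSD to SPD for $\Sigma$, I would establish the invertibility of $\cA$ in \eqref{eq:cA}: the Schur complement identity around the already invertible block $\fA$ forces $\Sigma$ to be invertible precisely when $\cA$ is. Suppose $\cA\,[\bv^T,\bmu^T,\brho^T]^T = \bzero$, i.e., $\cA_{ee}\bv + C^T\bmu = \bzero$, $C\bv = X\brho$, and $X^T\bmu = \bzero$. Testing the first equation by $\bv$ and substituting the other two gives $\bv^T\cA_{ee}\bv = -\brho^T X^T\bmu = 0$, so $\bv \in \Ker\cA_{ee}$ by SPSD-ness, and the first equation reduces to $C^T\bmu = \bzero$. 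The full row rank of each $C_T$ (used already in the proof of \cref{lem:invert}), together with the {\Element}-wise block structure of $C$, then forces $\bmu = \bzero$. Finally, $\cA_{ee}\bv = \bzero$ means $\cP_T\bv_T \in \Ker A_T$ for every $T$: on any interior {\Element}, $\cP_T\bv_T$ is a constant, while on any {\Element} meeting $\partial\Omega$ the SPD-ness of the locally Dirichlet-restricted $A_T$ forces $\bv_T = \bzero$. The constraint $C\bv = X\brho$, combined with $Q_F$ fixing face constants (since $\cF^H$ contains the local face constants), then transports the {\Element} constants through every {\Face}; connectedness of $\cT^H$ broadcasts the vanishing from a Dirichlet-anchored {\Element} throughout the partition, yielding $\bv = \bzero$ and subsequently $\brho = \bzero$.

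The main obstacle is the concluding propagation argument: the earlier reductions are routine consequences of \cref{lem:invert} and block linear algebra, but ruling out a nontrivial $\Ker\cA$ really relies on the specific structure of \eqref{eq:pde}---the one-dimensional null space of each interior $A_T$ spanned by a constant, the presence of the local face constants in $\cF^H$ (so that $Q_F$ fixes the transmitted trace), and connectedness of $\cT^H$---to broadcast the Dirichlet-imposed zero across the whole partition; once $\cA$ is invertible, SPD-ness of $\Sigma$ is immediate from its SPSD-ness.
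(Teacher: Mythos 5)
Your proof is correct, but it runs the argument in the opposite order from the paper and is genuinely different in its key step. The paper first establishes that $\Sigma$ is SPD---arguing that the essential boundary condition makes at least one $A_T$ nonsingular, hence at least one $\Sigma_T$ SPD, and then invoking the assembly property---and only afterwards deduces the invertibility of $\cA$ from the invertibility of $\fA$ and $\Sigma$ via the block factorization. You instead prove invertibility of $\cA$ first, by a direct analysis of $\Ker\cA$ (the energy test giving $\bv\in\Ker\cA_{ee}$, the full row rank of $C_T$ killing $\bmu$, and then the propagation of {\Element}-wise constants through the {\Faces} using that $\cF^H$ contains the face constants, anchored at a Dirichlet {\Element}), and only then recover SPD-ness of $\Sigma$ from its SPSD-ness plus the Schur-complement equivalence. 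Your route is longer but makes explicit something the paper's one-line appeal to ``the assembly property'' conceals: since the floating $\Sigma_T$ have nontrivial kernels (traces of local constants), concluding that the assembled $\Sigma$ is SPD from a single SPD $\Sigma_T$ already requires exactly the kind of constant-propagation/connectedness argument you spell out; in effect you globalize the local kernel analysis of \cref{lem:invert}. The only caveat is that your broadcast step tacitly assumes the dual graph of $\cT^H$ is connected---an assumption the paper also makes implicitly (it follows from $\Omega$ being a domain and the agglomerates partitioning a connected mesh)---so it is worth stating once.
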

\begin{proof}
The SPSD property of $\Sigma_T$ follows immediately from \eqref{eq:Schur} and the SNSD property of the $(2,2)$ block of $\fA_{T}^\mone$ in \cref{lem:invert}. Counting on the presence of essential boundary conditions for the PDE \eqref{eq:pde}, at least one $A_T$ (cf. \eqref{eq:H1linsys}) is nonsingular. Hence, in view of the full rank of $X_T$, at least one $\Sigma_T$ is SPD and the assembly property provides that $\Sigma$ is SPD. Finally, the invertibility of $\fA$ and $\Sigma$ implies the invertibility of $\cA$.
\end{proof}

\section{Auxiliary space preconditioners}
\label{sec:auxprec}

The application of the modified mortar formulation for building auxiliary space preconditioners is addressed now.

Let $\Pi_{h,e}\col \cE^h \mapsto \cU^h$ be a linear transfer operator defined in detail below. Identifying finite element functions with vectors allows to view $\Pi_{h,e}$ as a matrix and obtain $\Pi_{h,e}^T\col \cU^h \mapsto \cE^h$. In order to define the action of $\Pi_{h,e}$, recall that the relation between dofs on one side and edofs on the other is known and unambiguous. Therefore, it is reasonable to define the action of $\Pi_{h,e}$ as taking the arithmetic average, formulated in terms of edofs that correspond to a particular dof, of the entries of a given vector in $\cE^h$ and obtaining the respective entries of a mapped vector defined on dofs. Namely, for any dof $l$ let $J_l$ be the set of corresponding edofs (respective ``cloned'' degrees of freedom) and consider a vector $\bhv_e$ defined in terms of edofs. Then,
\[
(\Pi_{h,e} \bhv_e)_l = \frac{1}{\lv J_l \rv} \sum_{j\in J_l} (\bhv_e)_j.
\]
Clearly, all row sums of $\Pi_{h,e}$ equal 1 and each column of $\Pi_{h,e}$ has exactly one nonzero entry. Assuming that $\cT^h$ is a regular (non-degenerate) mesh \cite{BrennerFEM}, $\lv J_l \rv$ is bounded independently of $h$. That is,
\begin{equation}\label{eq:meshreg}
1 \le \lv J_l \rv \le \varkappa,
\end{equation}
for a constant $1 \le \varkappa < \infty$ which depends only on the regularity of $\cT^h$ but not on $h$. Indeed, let $\varkappa$ be the global maximum number of {\Elements} in $\cT^H$ that a dof can belong to, which in turn is bounded by the global maximum number of elements in $\cT^h$ that a dof can belong to.

Define the transfer operator $\Pi_H\col \cE^H \times [\cF^H]^2 \times \cF^H \mapsto \cU^h$ as 
\begin{equation}\label{eq:PiH}
\Pi_H = [\Pi_{H,e},\, O,\, O],
\end{equation}
where the linear mapping $\Pi_{H,e}\col \cE^H \mapsto \cU^h$ takes the form $\Pi_{H,e} = \Pi_{h,e} \cP_{e}$.

Let $M$ be a ``smoother'' for $A$ in \eqref{eq:H1linsys} such that $M^T + M - A$ is SPD, and $\cB$ is a symmetric (generally indefinite) preconditioner for $\cA$. Define the \emph{additive auxiliary space preconditioner} for $A$
\begin{equation}\label{eq:Badd}
B^\mone_{\mathrm{add}} = \ol{M}^\mone + \Pi_H \cB^\mone \Pi_H^T,
\end{equation}
and the \emph{multiplicative auxiliary space preconditioner} for $A$
\begin{equation}\label{eq:Bmult}
B^\mone_{\mathrm{mult}} = \ol{M}^\mone + (I - M^{-T}A)\Pi_H \cB^\mone \Pi_H^T (I - A M^\mone),
\end{equation}
where $\ol{M} = M (M + M^T -A)^\mone M^T$ is the symmetrized (in fact, SPD) version of $M$. In case $M$ is symmetric, $\ol{M}^\mone$ in $B^\mone_{\mathrm{add}}$ can be replaced by $M^\mone$. The action of $B^\mone_{\mathrm{mult}}$ is obtained via a standard ``two-level'' procedure:

Given $\bv_0 \in \bbR^{\dim(\cU^h)}$, $\bv_\mathrm{m} = B^\mone_{\mathrm{mult}} \bv_0$ is computed by the following steps:
\begin{enumerate}[label=(\roman*)]
\item ``pre-smoothing'': $\bv_1 = M^{-1} \bv_0$;
\item residual transfer to auxiliary space: $\bhr = \Pi_H^T(\bv_0 - A \bv_1)$;
\item auxiliary space correction: $\bhv = \cB^\mone \bhr$;
\item correction transfer from auxiliary space: $\bv_2 = \bv_1 + \Pi_H \bhv$;
\item ``post-smoothing'': $\bv_\mathrm{m} = \bv_2 + M^{-T}(\bv_0 - A \bv_2)$.
\end{enumerate}

If $\cB$ is chosen SPD, then $B^\mone_{\mathrm{add}}$ and $B^\mone_{\mathrm{mult}}$ are clearly SPD. In general, even when the (exact) inverse $\cB^\mone = \cA^\mone$ of $\cA$ in \eqref{eq:cA} is used, it is not immediately obvious whether $B^\mone_{\mathrm{add}}$ and $B^\mone_{\mathrm{mult}}$ are SPD. This is discussed in \cref{sec:analysisexact}.

\subsection*{Smoother}

A particular smoother $M$ for $A$, which is a part of the auxiliary space preconditioners employed in this paper, is shortly described now. Particularly, a polynomial smoother based on the Chebyshev polynomial of the first kind is utilized. For a given integer $\nu \ge 1$, consider the polynomial of degree $3\nu + 1$ on $[0,1]$
\[
p_\nu(t) = \lp 1 - T_{2\nu+1}^2(\sqrt{t}) \rp \frac{(-1)^\nu}{2\nu+1} \frac{T_{2\nu+1}(\sqrt{t})}{\sqrt{t}},
\]
satisfying $p_\nu(0) = 1$, where $T_l(t)$ is the Chebyshev polynomial of the first kind on $[-1, 1]$. Then, $M$ is defined as
\begin{equation}\label{eq:M}
M^\mone = [I - p_\nu(b^\mone D^\mone A)]A^\mone.
\end{equation}
Equivalently, $I - M^\mone A = p_\nu(b^\mone D^\mone A)$, where $b = \cO(1)$ is a parameter satisfying $\bv^T A \bv \le b\, \bv^T D \bv$ for all $\bv \in \cU^h$ and $D$ is either the diagonal of $A$ or another appropriate diagonal matrix. Note that $M$ is SPD and the action of such a polynomial smoother is computed via $3\nu+1$ Jacobi-type iterations using the roots of the polynomial, which makes it convenient for parallel computations; see \cite[Section 4.2.2]{MSthesis}. In practice, $D$ in \eqref{eq:M} can be replaced by a diagonal \emph{weighted $\ell_1$-smoother} like $W = \diag(w_i)_{i=1}^{\dim(\cU^h)}$, where $w_i = \sum_{j=1}^{\dim(\cU^h)} \lv a_{ij} \rv \sqrt{a_{ii}/a_{jj}}$. Such a choice allows setting $b = 1$. More information on the subject can be found in \cite{VassilevskiMG,2011Smoothers,2012SAAMGe,2012ConvSAAMG,1999TGElasticity,2012xinv}.

\section{Analysis}
\label{sec:analysisexact}

Properties of the preconditioners of the type in \eqref{eq:Badd} and \eqref{eq:Bmult} are studied next, showing their optimality in a fine-scale setting. Consider the auxiliary space preconditioners involving the exact inversion of $\cA$
\begin{equation}\label{eq:tB}
\begin{split}
\tB^\mone_{\mathrm{add}} &= \ol{M}^\mone + \Pi_H \cA^\mone \Pi_H^T,\\
\tB^\mone_{\mathrm{mult}} &= \ol{M}^\mone + (I - M^{-T}A)\Pi_H \cA^\mone \Pi_H^T (I - A M^\mone).
\end{split}
\end{equation}
The seemingly apparent auxiliary space here is $\cE^H \times [\cF^H]^2 \times \cF^H$. However, owing to \eqref{eq:mortarmin} and the structure of $\Pi_H$ in \eqref{eq:PiH}, a subspace of $\cE^H$ (more precisely, a subspace of $\cE^H \times \set{0}^2 \times \set{0}$) can be more accurately viewed as the auxiliary space and the properties of $\tB^\mone_{\mathrm{add}}$ and $\tB^\mone_{\mathrm{mult}}$ depend on the properties of the ``subactions'' of $\cA$ and $\Pi_H$ on that subspace.

Clearly, $\brho \in \cF^H$ can be eliminated from \eqref{eq:mortarmin} by replacing the constraints with $Q_{F}[\restr{(\restrr{(\cP_e\bv)}{T_+})}{F} - \restr{(\restrr{(\cP_e\bv)}{T_-})}{F}] = \bzero$ for all $F\in\Phi^H$, where $T_+,T_- \in \cT^H$ denote the respective {\Elements} adjacent to $F$. These altered conditions pose direct constraints on the jumps across {\Faces} of $\bv \in \cE^H$. The modified  minimization problem is equivalent to \eqref{eq:mortarmin} in the sense that it has an identical set of minimizers in $\cE^H$. The constraints can be implicitly imposed by employing the constrained subspace $\tcE^H \subset \cE^H$ defined as
\[
\tcE^H = \set*{\bv \in \cE^H\where Q_{F}[\restr{(\restrr{(\cP_e\bv)}{T_+})}{F} - \restr{(\restrr{(\cP_e\bv)}{T_-})}{F}] = \bzero, \,\forall F\in\Phi^H}.
\]
Obviously, any solution to \eqref{eq:mortarmin} is in $\tcE^H$. Consequently, the unconstrained quadratic minimization over $\bv \in \tcE^H$
\begin{equation}\label{eq:mortarminunconstr}
\min \sum_{T\in\cT^H} \lb [\restr{(\cP_e\bv)}{T}]^T A_T \,\restr{(\cP_e\bv)}{T} - 2 [\restr{(\cP_e\bv)}{T}]^T \bff_T \rb
\end{equation}
is equivalent, in terms of minimizers, to \eqref{eq:mortarmin}. It is trivial that the bilinear form $\ta\col \tcE^H\times \tcE^H \mapsto \bbR$ associated with \eqref{eq:mortarminunconstr} and defined as
\begin{equation}\label{eq:ta}
\ta(\bu, \bv) = \sum_{T\in\cT^H} [\restr{(\cP_e\bv)}{T}]^T A_T \,\restr{(\cP_e\bu)}{T}, \quad\forall\bu,\bv\in\tcE^H \subset \cE^H,
\end{equation}
is SPSD. Here, for the convenience of maintaining consistent vector notation, the basis of $\cE^H$ and its {\Edofs} are employed to represent functions in the constrained subspace $\tcE^H \subset \cE^H$. The invertibility of $\cA$ (\cref{cor:invert}) in \eqref{eq:cA} implies the existence of a unique minimizer in $\tcE^H$ of \eqref{eq:mortarmin} and equivalently of \eqref{eq:mortarminunconstr}, which in turn implies that $\ta(\cdot,\cdot)$ is SPD on $\tcE^H$.

Notice that the matrix associated with $\ta(\cdot,\cdot)$ with respect to the full basis of $\cE^H$ is $\cA_{ee}$ in \eqref{eq:cA}. That is,
\[
\ta(\bu, \bv) = \bv^T \cA_{ee} \bu, \quad\forall\bu,\bv\in\tcE^H \subset \cE^H.
\]
The discussion above demonstrates that while $\cA_{ee}$ is SPSD on the entire $\cE^H$, it is SPD when restricted to the constrained subspace $\tcE^H$, i.e., $\bv^T \cA_{ee} \bv > 0$ for all $\bv \in \tcE^H \setminus \set{\bzero}$ since the constraints filter out its nonzero null vectors ($\ker(\cA_{ee}) \cap \tcE^H = \set{\bzero}$). Hence, the ``inversion operator'' $\cA_{ee}^\mone\col \cE^H \mapsto \tcE^H$ is well-defined as follows: for any $\bg \in \cE^H$, $\cA_{ee}^\mone \bg \in \tcE^H$ is the unique function (vector) that satisfies
\begin{equation}\label{eq:cAeeinv}
\ta(\cA_{ee}^\mone \bg, \bv) = \bv^T \bg,\quad \forall\bv\in \tcE^H.
\end{equation}
That is, when $\bg \notin \tcE^H$, $\cA_{ee}^\mone$ provides the ``least-squares solution'' associated with a minimization of the type in \eqref{eq:mortarminunconstr}. Then, owing to \eqref{eq:PiH} and the equivalence between \eqref{eq:mortarmin} and \eqref{eq:mortarminunconstr}, it holds that $\Pi_H \cA^\mone \Pi_H^T = \Pi_{H,e} \cA_{ee}^\mone \Pi_{H,e}^T$. Thus, $\Pi_H \cA^\mone \Pi_H^T$ is SPSD and the following proposition is obtained.

\begin{proposition}
The preconditioners $\tB^\mone_{\mathrm{add}}$ and $\tB^\mone_{\mathrm{mult}}$ in \eqref{eq:tB} are SPD.
\end{proposition}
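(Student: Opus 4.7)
The bulk of the substantive work has already been done in the paragraph preceding the proposition, where it is shown that $\Pi_H \cA^\mone \Pi_H^T = \Pi_{H,e} \cA_{ee}^\mone \Pi_{H,e}^T$ and that this matrix is SPSD (the SPSD property coming from the fact that $\ta(\cdot,\cdot)$ is SPD on $\tcE^H$, combined with the definition \eqref{eq:cAeeinv}). My plan is therefore just to assemble these ingredients into a short, structured argument for each of the two preconditioners.

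First I would handle the smoother term. The assumption that $M^T + M - A$ is SPD, together with the definition $\ol{M} = M(M+M^T-A)^\mone M^T$, immediately gives that $\ol{M}$ is SPD and hence so is $\ol{M}^\mone$. This piece appears in both $\tB^\mone_{\mathrm{add}}$ and $\tB^\mone_{\mathrm{mult}}$ and will contribute the strict positive definiteness. Next, for the auxiliary-space term, I would record symmetry of $\cA^\mone$ (which follows from invertibility in \cref{cor:invert} together with the fact that $\cA$ is symmetric), deduce symmetry of $\Pi_H \cA^\mone \Pi_H^T$, and then invoke the already-established identity $\Pi_H \cA^\mone \Pi_H^T = \Pi_{H,e} \cA_{ee}^\mone \Pi_{H,e}^T$ to conclude that this term is SPSD.

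For the additive preconditioner this finishes the argument: $\tB^\mone_{\mathrm{add}}$ is the sum of an SPD matrix and an SPSD matrix, hence SPD. For the multiplicative preconditioner I would observe that $(I - M^{-T} A)^T = I - A M^\mone$, so the second term has the form $S\, (\Pi_H \cA^\mone \Pi_H^T)\, S^T$ with $S = I - M^{-T} A$. Conjugation preserves symmetry and positive semidefiniteness, so this term is SPSD, and again the sum with the SPD $\ol{M}^\mone$ yields SPD.

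The only potential subtlety — which I would treat as the ``main obstacle,'' modest as it is — is being careful about the identification $\Pi_H \cA^\mone \Pi_H^T = \Pi_{H,e} \cA_{ee}^\mone \Pi_{H,e}^T$, since $\cA_{ee}$ itself is only SPSD on all of $\cE^H$ and the operator $\cA_{ee}^\mone$ has been given the nonstandard meaning in \eqref{eq:cAeeinv}. I would briefly note that this identity is justified by the equivalence between \eqref{eq:mortarmin} and the unconstrained problem \eqref{eq:mortarminunconstr} over $\tcE^H$: the block $\Pi_H = [\Pi_{H,e},O,O]$ ensures that right-hand sides are lifted only into the $\cE^H$ component and that only the $\cE^H$ component of $\cA^\mone$ is read off, so $\cA^\mone$ acts precisely as the (constrained) solve defined by \eqref{eq:cAeeinv}. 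Once this identification is in hand, the SPD claim is a direct consequence of ``SPD $+$ SPSD is SPD'' and ``SPD $+$ $S X S^T$ with $X$ SPSD is SPD,'' and no further analysis is required.
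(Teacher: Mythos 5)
Your proposal is correct and follows essentially the same route as the paper: the paper's ``proof'' is precisely the preceding paragraph establishing that $\Pi_H \cA^\mone \Pi_H^T = \Pi_{H,e} \cA_{ee}^\mone \Pi_{H,e}^T$ is SPSD, after which the proposition is asserted to follow, with the remaining steps (SPD-ness of $\ol{M}^\mone$, the conjugation structure in the multiplicative case) left implicit. You have simply spelled out those implicit steps correctly.
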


Therefore, the desired ``subactions'' of $\Pi_H$, $\cA$, and $\cA^\mone$ are respectively provided by $\Pi_{H,e}$, $\cA_{ee}$, and $\cA_{ee}^\mone$ on the auxiliary space $\tcE^H$ equipped with the norm induced by $\ta(\cdot,\cdot)$, i.e., the $\cA_{ee}$-norm. In this context, the preconditioners in \eqref{eq:tB} can be expressed as
\begin{equation}\label{eq:tB1}
\begin{split}
\tB^\mone_{\mathrm{add}} &= \ol{M}^\mone + \Pi_{H,e} \cA_{ee}^\mone \Pi_{H,e}^T,\\
\tB^\mone_{\mathrm{mult}} &= \ol{M}^\mone + (I - M^{-T}A)\Pi_{H,e} \cA_{ee}^\mone \Pi_{H,e}^T (I - A M^\mone).
\end{split}
\end{equation}

\begin{remark}
Particularly, when $\cE^H = \cE^h$, i.e., no coarsening of $\cE^h$ is employed and $\Pi_{H,e} = \Pi_{h,e}$, then $\Range(\Pi_{H,e}^T) \subset \tcE^H$ and $\Pi_H \cA^\mone \Pi_H^T = \Pi_{H,e} \cA_{ee}^\mone \Pi_{H,e}^T$ is SPD.
\end{remark}

For the rest of this section, the fine-scale case $\cE^H = \cE^h$ is studied, where the respective $\tcE^h$ is consistently defined as
\begin{equation}\label{eq:tcE}
\tcE^h = \set*{\bhv \in \cE^h\where Q_{F}[\restr{(\restrr{\bhv}{T_+})}{F} - \restr{(\restrr{\bhv}{T_-})}{F}] = \bzero, \,\forall F\in\Phi^H},
\end{equation}
i.e., the coarse interface subspace $\cF^H$ is still employed for the jumps. The analysis follows a similar pattern to \cite{IPpreconditioner}. Define the operator $\cI_{h,e}\col  \cU^h \mapsto \tcE^h$ for $\bv \in \cU^h$ via
\[
(\cI_{h,e} \bv)_j = (\bv)_l,
\]
for each edof $j$, where $j \in J_l$ for the corresponding dof $l$. This describes a procedure that appropriately copies the entries of $\bv$ so that the respective finite element functions, corresponding to $\bv$ and $\cI_{h,e} \bv$, can be viewed as coinciding in $H^1(\Omega)$. That is, $\cI_{h,e}$ is an injection (embedding) of $\cU^h$ into $\tcE^h$. As matrices, $\cI_{h,e}$ has the fill-in pattern of $\Pi_{h,e}^T$ with all nonzero entries replaced by 1.

Clearly, $\Pi_{h,e} \cI_{h,e} = I$, the identity on $\cU^h$, implying that $\Pi_{h,e}$ is surjective, i.e., it has a full row rank. Consequently, for any $\bv \in \cU^h$, $\cI_{h,e} \bv \in \tcE^h$ (exactly) approximates $\bv$ in the sense
\begin{equation}\label{eq:approx}
\bv - \Pi_{h,e} \cI_{h,e} \bv = \bzero,
\end{equation}
and $\cI_{h,e} \bv$ is ``energy'' stable since $(\cI_{h,e} \bv)^T \cA_{ee}\, \cI_{h,e} \bv = \bv^T A \bv$ due to the property that $A$ can be assembled from $A_T$ (cf. \eqref{eq:H1linsys}, \eqref{eq:cA}, and \eqref{eq:ta}), which implies
\begin{equation}\label{eq:stable}
\cI_{h,e}^T \cA_{ee}\, \cI_{h,e} = A.
\end{equation}
This is to be expected since in a sense $\tcE^h$ ``includes'' $\cU^h$.

Showing the continuity of $\Pi_{h,e}\col \tcE^h \mapsto \cU^h$ in terms of the respective ``energy'' norms is more challenging and requires $\cF^H$ to satisfy certain properties. Using the indexing notation in \cref{ssec:spaces}, introduce the following splittings for $T\in\cT^H$:
\begin{equation}\label{eq:splitting}
A_T =
\begin{bmatrix}
A_{T,ii} &A_{T,ir}\\
A_{T,ri} &A_{T,rr}
\end{bmatrix},\;
\Pi_{h,e} =
\begin{bmatrix}
\Pi_{ii} &\\
&\Pi_{rs}
\end{bmatrix},\;
\cI_{h,e} =
\begin{bmatrix}
I\\
&\cI_{sr}
\end{bmatrix},
\end{equation}
where $\Pi_{ii} = I$ under an appropriate ordering of the edofs, $\Pi_{rs}$ has exactly one nonzero entry in each column and at least two nonzero entries in each row (exactly two when the respective dofs are in the interior of a {\Face}), and $\cI_{sr}$ is the map from ``$r$'' to ``$s$'' indices. Note that $\cI_{sr}$ is a matrix with the fill-in pattern of $\Pi_{rs}^T$ where all nonzero entries are replaced by 1. Also, for $F \subset \partial T$, let $A_{T,F}$ denote the local on $F$ version of $A_{T,rr}$ in \eqref{eq:splitting} such that $A_{T,rr}$ can be assembled from $A_{T,F}$ for all $F \subset \partial T$. Assume that $\cF^H$ is such that the following local on $F$ property holds for the projection mapping $Q_F$ in \eqref{eq:mortarmin} and some independent of $h$ and $H$ constant $K > 0$:
\begin{equation}\label{eq:bdrapprox}
(\bv_F - Q_F\, \bv_F)^T D_F \,(\bv_F - Q_F\, \bv_F) \le K\, \bv_F^T\, A_{T,F}\,\bv_F,
\end{equation}
% \begin{equation}\label{eq:bdrapprox}
% (Q_F \bv_F)^T A_{T,F}\,Q_F \bv_F \le K\, \bv_F^T A_{T,F}\,\bv_F,\quad \forall F\in\Phi^H,\,\forall T \in \cT^H \text{ s.t. } \partial T \supset F,
% \end{equation}
for all $F\in\Phi^H$, all $T \in \cT^H$ such that $\partial T \supset F$, and all vectors $\bv_F$ expressed in terms of the fine-scale bdofs on $F$. 

\begin{remark}\label{rem:approx}
Here, \eqref{eq:bdrapprox} represents an ``approximation property'', in ``energy'', of the coarse trace space $\cF^H$ relative to the fine trace space $\cF^h$. It can be interpreted as a measure of quality of approximating ``smooth'' modes on the interfaces. Similar bounds appear in spectral AMGe methods (cf. \cite{2012SAAMGe}) and are obtained via the solution of local generalized eigenvalue problems. For example, in the context here, one possibility would be to use the eigenvectors that correspond to the  eigenvalues in a lower portion of the spectrum of generalized eigenvalue problems of the type $A_{T,F}\, \bv = \lambda D_F\, \bv$ or similar local eigenvalue problems on small patches of elements forming neighborhoods around the {\Faces}. In this work, for simplicity and demonstration purposes, we utilize standard polynomials for the construction of $\cF^H$. Note that, in general, the constant $K$ in \eqref{eq:bdrapprox} may depend on local (in a neighborhood of $F$) quasi-uniformity of the mesh $\cT^h$ and the coefficient $\kappa$ in \eqref{eq:pde}.
\end{remark}

The continuity of $\Pi_{h,e}$ is demonstrated next.

\begin{lemma}\label{lem:approx}
The operator $E_{h,e}\col \tcE^h \mapsto \tcE^h$ defined as $E_{h,e} = \cI_{h,e}\Pi_{h,e} - I$, where $I$ is the identity on $\tcE^h$, is bounded in the sense
\[
(\cI_{h,e}\Pi_{h,e} \bhv - \bhv)^T \cA_{ee} (\cI_{h,e}\Pi_{h,e} \bhv - \bhv) \le 2 \varkappa^2\Lambda K\;\bhv^T \cA_{ee} \bhv,
\]
for all $\bhv \in \tcE^h$, where $K$ is the constant in \eqref{eq:bdrapprox}, $\varkappa$ is from \eqref{eq:meshreg}, and $\Lambda > 0$ is a constant independent of $h$, $H$, the coefficient $\kappa$ in \eqref{eq:pde}, and the regularity of $\cT^h$.
\end{lemma}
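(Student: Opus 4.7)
The strategy is to exploit the block-diagonal structure of $E_{h,e}$ together with the assembly $A_{T,rr} = \sum_{F\subset\partial T} A_{T,F}$ in order to decompose the error energy face by face, and then to invoke the approximation property \eqref{eq:bdrapprox} to control each face contribution by local {\Element} energies. Using the splittings in \eqref{eq:splitting}, $E_{h,e} = \cI_{h,e}\Pi_{h,e} - I$ has the form $\diag(0,\, \cI_{sr}\Pi_{rs} - I)$, so $\bw \coloneqq E_{h,e}\bhv$ vanishes on all interior (``$i$'') edofs. Setting $\bw_T = \restr{\bw}{T}$ and $\bw_{T,F}$ for its restriction to bdofs on $F$, and using that a vector with vanishing interior part has $A_T$-energy equal to the $A_{T,rr}$-energy of its boundary part, the face-wise assembly yields
\[
(E_{h,e}\bhv)^T \cA_{ee}\,(E_{h,e}\bhv) \;=\; \sum_{T\in\cT^H} \bw_T^T A_{T,rr}\, \bw_T \;=\; \sum_{F\in\Phi^H}\;\sum_{T:\, F\subset\partial T} \bw_{T,F}^T A_{T,F}\,\bw_{T,F}.
\]

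On each face $F$ with adjacent {\Elements} $T_\pm$, I would identify the entries of $\bw_{T_\pm,F}$ with multiples of the trace jump $[\bhv]_F \coloneqq \restr{\bhv_+}{F} - \restr{\bhv_-}{F}$: for a dof $l$ interior to $F$ (where $|J_l|=2$), the averaging definition of $\Pi_{h,e}$ directly gives $(\bw_{T_\pm,F})_l = \mp\tfrac12 [\bhv]_{F,l}$. For dofs on the lower-dimensional boundary of $F$, where $|J_l|>2$, each entry is a $1/|J_l|$-weighted sum of pair-differences $\bhv_k - \bhv_j$ between edof values across several {\Elements} sharing $l$; Cauchy--Schwarz combined with $|J_l|\le \varkappa$ from \eqref{eq:meshreg} controls the squared entry by at most $\varkappa$ squared differences, each of which rearranges into a sum of trace jumps across nearby {\Faces}. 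An inverse-type estimate $A_{T,F} \preceq \Lambda D_F$ on $F$, with $\Lambda$ depending only on the local finite-element structure (hence independent of $h$, $H$, $\kappa$, and the regularity of $\cT^h$), then converts $A_{T,F}$- into $D_F$-norms and reduces the per-face sum to $\varkappa\Lambda\,[\bhv]_F^T D_F [\bhv]_F$ plus analogous contributions from neighbouring {\Faces}.

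The final step uses \eqref{eq:bdrapprox}: since $\bhv\in\tcE^h$, the constraint $Q_F[\bhv]_F = \bzero$ places $[\bhv]_F$ in $\Range(I - Q_F)$, so \eqref{eq:bdrapprox} gives $[\bhv]_F^T D_F [\bhv]_F \le K\,[\bhv]_F^T A_{T_\pm,F}\,[\bhv]_F$. A triangle inequality of the form $(a-b)^T M (a-b) \le 2(a^T M a + b^T M b)$, applied with $[\bhv]_F = \restr{\bhv_+}{F} - \restr{\bhv_-}{F}$, together with the domination of the trace block $A_{T_\pm,F}$ by the full local matrix $A_{T_\pm}$, produces the prefactor $2$ and yields a sum of local {\Element} energies. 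Summing over all faces and accounting that each {\Element} has at most $\varkappa$ adjacent {\Faces} supplies the remaining factor of $\varkappa$, giving the claimed total constant $2\varkappa^2\Lambda K$.

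The main obstacle I expect is the treatment of bdofs on the lower-dimensional boundary of a face $F$: there $|J_l|>2$, and the entry of $\bw_{T,F}$ is not a simple multiple of $[\bhv]_{F,l}$ but is built from pair-differences across several {\Elements}. Expressing each such pair-difference as a telescoping sum of trace jumps along a short chain of {\Faces} sharing the dof $l$, and verifying that the resulting combinatorial overcount is absorbed by the factor $\varkappa^2$ rather than producing higher powers, is the delicate bookkeeping step where \eqref{eq:meshreg} is used essentially.
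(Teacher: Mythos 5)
Your proposal follows essentially the same route as the paper: the error $E_{h,e}\bhv$ is supported on the boundary (``$s$'') edofs, its energy is assembled {\Face} by {\Face}, the blocks $A_{T,F}$ are bounded above by their diagonals with a constant $\Lambda$, the result is reduced to $D_F$-norms of inter-{\Element} jumps, and then \eqref{eq:bdrapprox}, the constraint $Q_F[\cdot]=\bzero$ defining $\tcE^h$, and a triangle inequality (the source of the factor $2$) close the estimate. Your computation $(\bw_{T_\pm,F})_l=\mp\tfrac12[\bhv]_{F,l}$ for dofs interior to a {\Face} is also exactly what happens in the paper.

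The one step you flag as delicate --- dofs $l$ with $\lv J_l\rv>2$ --- is where your sketch is not quite right as stated. The factor you attribute to ``each {\Element} has at most $\varkappa$ adjacent {\Faces}'' invokes the wrong combinatorial quantity: \eqref{eq:meshreg} bounds the number of {\Elements} sharing a given dof, not the number of {\Faces} of an {\Element}. The paper instead bounds the averaging error at $l$ by $\lv J_l\rv\, d_l\,(M_l-m_l)^2$, with $M_l,m_l$ the extreme edof values over $J_l$, and then joins the two extremal edofs by a path in the graph whose edges are pairs of edofs of $J_l$ sharing a common {\Face} (cf.\ \cref{fig:dofconnection}); the path has length at most $\lv J_l\rv\le\varkappa$, so Cauchy--Schwarz along the path combined with the prefactor $\lv J_l\rv$ produces $\varkappa^2$ times a sum of squared pairwise differences, each attributable to a single {\Face} jump. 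Your telescoping idea is the right instinct, but without this explicit path/connectivity structure the claim that the overcount is absorbed by $\varkappa^2$ (rather than by a larger power or a face-valence constant) is left unproven.
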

\begin{proof}
Using \eqref{eq:splitting}, let $\bv = \Pi_{h,e} \bhv = \lb \bhv_i^T, (\Pi_{rs} \bhv_s)^T \rb^T$, where $\bhv^T = [\bhv_i^T, \bhv_s^T]^T$. Then,
\begin{align*}
\cI_{h,e} \bv &= \cI_{h,e} \Pi_{h,e} \bhv = \lb \bhv_i^T, (\cI_{sr}\Pi_{rs} \bhv_s)^T \rb^T,\\
\btv &= \cI_{h,e} \bv - \bhv = \lb \bzero^T, (\cI_{sr}\Pi_{rs} \bhv_s - \bhv_s)^T \rb^T,
\end{align*}
and
\begin{equation}\label{eq:bdronlyestimate}
\begin{split}
\btv^T \cA_{ee} \btv &= \sum_{T\in\cT^H} \sum_{F\subset \partial T} [\restr{(\Pi_{rs}\bhv_s)}{F} - \restr{(\restr{\bhv_s}{T})}{F}]^T A_{T,F} [\restr{(\Pi_{rs}\bhv_s)}{F} - \restr{(\restr{\bhv_s}{T})}{F}]\\
&\le \Lambda \sum_{T\in\cT^H} \sum_{F\subset \partial T} [\restr{(\Pi_{rs}\bhv_s)}{F} - \restr{(\restr{\bhv_s}{T})}{F}]^T D_{T,F} [\restr{(\Pi_{rs}\bhv_s)}{F} - \restr{(\restr{\bhv_s}{T})}{F}],
\end{split}
\end{equation}
where $D_{T,F}$ is the diagonal of $A_{T,F}$. It is utilized that locally, as noted in \cref{ssec:spaces}, ``$r$'' and ``$s$'' indices can be identified and that the stiffness matrices can be bounded from above by their diagonals with some constant $\Lambda > 0$.

Consider a dof $l$ from the ``$r$'' dofs and the set $J_l$ of related ``$s$'' edofs. Notice that $J_l$ is represented by the $l$-th column of $\cI_{sr}$. Furthermore, let $d_l$ be the corresponding diagonal entry in $A$ and $m_l$, $M_l$ are respectively the minimum and maximum values of $\bhv$ on the edofs in $J_l$. Clearly,
\[
d_l \sum_{j\in J_l}\lb \frac{1}{\lv J_l \rv} \sum_{k\in J_l}(\bhv_s)_k - (\bhv_s)_j \rb^2 \le \lv J_l \rv d_l \lp M_l - m_l \rp^2.
\]
Now, viewing any {\Face} as a collection of respective ``$s$'' edofs, consider a connectivity structure such that two ``$s$'' edofs are connected if they belong to a common {\Face}; see \cref{fig:dofconnection}. Observe that, in terms of this connectivity structure, the ``$s$'' edofs corresponding to $M_l$ and $m_l$ are connected via a path whose length is bounded by $|J_l|$. Following along this path applying the triangle inequality and \eqref{eq:meshreg}, it holds
\[
\lv J_l \rv d_l \lp M_l - m_l \rp^2 \le \varkappa^2 d_l \sum_{(j_+, j_-)\in N_l} \lb (\bhv_s)_{j_+} - (\bhv_s)_{j_-} \rb^2,
\]
where $N_l$ is the set of all pairs of connected ``$s$'' edofs in $J_l$. Hence,
\[
d_l \sum_{j\in J_l}\lb \frac{1}{\lv J_l \rv} \sum_{k\in J_l}(\bhv_s)_k - (\bhv_s)_j \rb^2 \le \varkappa^2 d_l \sum_{(j_+, j_-)\in N_l} \lb (\bhv_s)_{j_+} - (\bhv_s)_{j_-} \rb^2.
\]
Summing over $l$ in the last inequality, in view of \eqref{eq:bdronlyestimate}, \eqref{eq:tcE}, and \eqref{eq:bdrapprox}, provides
\begin{align*}
\btv^T \cA_{ee} \btv &\le \varkappa^2\Lambda \sum_{F \in \Phi^H} \llV \restr{(\restrr{\bhv}{T_+})}{F} - \restr{(\restrr{\bhv}{T_-})}{F} \rrV^2_{D_F}\\
&= \varkappa^2\Lambda \sum_{F \in \Phi^H} \llV (I - Q_F) [ \restr{(\restrr{\bhv}{T_+})}{F} - \restr{(\restrr{\bhv}{T_-})}{F} ] \rrV^2_{D_F}\\
&\le 2 \varkappa^2\Lambda \sum_{T\in\cT^H} \sum_{F\subset \partial T} \llV (I - Q_F) \restr{(\restrr{\bhv}{T})}{F} \rrV^2_{D_F}\\
&\le 2 \varkappa^2\Lambda K  \sum_{T\in\cT^H} \sum_{F\subset \partial T} (\restr{(\restrr{\bhv}{T})}{F})^T A_{T,F}\, \restr{(\restrr{\bhv}{T})}{F} \le 2 \varkappa^2\Lambda K\;\bhv^T \cA_{ee} \bhv,
\end{align*}
where $\lV \cdot \rV_{D_F}$ is the norm induced by $D_F$.
\end{proof}

\begin{corollary}\label{cor:cont}
The operator $\Pi_{h,e}\col \tcE^h \mapsto \cU^h$ is continuous in the sense
\[
(\Pi_{h,e} \bhv)^T A \, \Pi_{h,e} \bhv \le 2(1 + 2\varkappa^2\Lambda K)\, \bhv^T \cA_{ee} \bhv,
\]
for all $\bhv \in \tcE^h$, where the constants are the same as in \cref{lem:approx}.
\end{corollary}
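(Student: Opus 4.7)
The plan is to deduce the continuity of $\Pi_{h,e}$ directly from the approximation estimate of \cref{lem:approx}, combined with the ``energy stability'' identity \eqref{eq:stable} for the embedding $\cI_{h,e}$. The key point is that $\Pi_{h,e} \bhv \in \cU^h$ can be reinterpreted, via $\cI_{h,e}$, as a vector in $\tcE^h$, and then compared against the original $\bhv$ in the $\cA_{ee}$-norm.

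First I would rewrite the left-hand side using \eqref{eq:stable}: since $\cI_{h,e}^T \cA_{ee}\, \cI_{h,e} = A$, one has
\[
(\Pi_{h,e} \bhv)^T A\, \Pi_{h,e} \bhv = (\cI_{h,e}\Pi_{h,e} \bhv)^T \cA_{ee}\, (\cI_{h,e}\Pi_{h,e} \bhv).
\]
Then I would split $\cI_{h,e}\Pi_{h,e} \bhv = \bhv + (\cI_{h,e}\Pi_{h,e} \bhv - \bhv) = \bhv + E_{h,e}\bhv$ and apply the standard SPSD inequality $(\bx+\by)^T \cA_{ee} (\bx+\by) \le 2\bx^T\cA_{ee}\bx + 2\by^T\cA_{ee}\by$ (which follows from $2\langle \bx,\by\rangle \le \lV\bx\rV^2 + \lV\by\rV^2$ in the $\cA_{ee}$-seminorm). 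Note that both $\bhv$ and $\cI_{h,e}\Pi_{h,e}\bhv$ lie in $\tcE^h$ (the latter by the definition of $\cI_{h,e}$), so working in the $\cA_{ee}$-seminorm is legitimate even though $\cA_{ee}$ is only SPSD on the ambient $\cE^h$.

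Finally, I would invoke \cref{lem:approx} to bound $(E_{h,e}\bhv)^T \cA_{ee} (E_{h,e}\bhv) \le 2\varkappa^2 \Lambda K\, \bhv^T \cA_{ee}\bhv$, yielding the stated constant $2(1 + 2\varkappa^2\Lambda K)$. There is essentially no serious obstacle here: all nontrivial work was done in \cref{lem:approx}; this corollary is a two-line assembly of the stability identity \eqref{eq:stable}, the triangle inequality in the $\cA_{ee}$-seminorm, and the approximation bound. The only minor subtlety worth verifying is that $E_{h,e}$ indeed maps $\tcE^h$ to $\tcE^h$ (so that \cref{lem:approx} applies with the correct norm), but this is immediate since $\cI_{h,e}\Pi_{h,e}\bhv \in \tcE^h$ and $\bhv \in \tcE^h$, and $\tcE^h$ is a linear subspace.
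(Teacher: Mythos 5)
Your argument is correct and is precisely the standard one the paper intends: the paper's proof simply defers to \cite[Corollary 3.2]{IPpreconditioner}, which assembles the identity \eqref{eq:stable}, the decomposition $\cI_{h,e}\Pi_{h,e}\bhv = \bhv + E_{h,e}\bhv$ with the triangle (Cauchy--Schwarz) inequality in the $\cA_{ee}$-seminorm, and the bound from \cref{lem:approx}, exactly as you do. The constant $2(1+2\varkappa^2\Lambda K)$ and the check that both $\bhv$ and $\cI_{h,e}\Pi_{h,e}\bhv$ lie in $\tcE^h$ are handled correctly.
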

\begin{proof}
The proof is analogous to \cite[Corollary 3.2]{IPpreconditioner}.
\end{proof}

\begin{figure}
\centering
\includegraphics[width=1.00\textwidth]{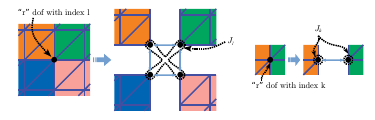}
\caption[]{An illustration of the sets of edofs associated with an ``$r$'' dof and the connectivity between ``$s$'' edofs used in the proof of \cref{lem:approx}.}\label{fig:dofconnection}
\end{figure}

Based on the above properties, the optimality of the auxiliary space preconditioners in a fine-scale setting can be established. Consider first the ``fictitious space preconditioner'' for $A$
\begin{equation}\label{eq:fictprec}
\hB^\mone = \Pi_{h,e} \cB_{ee}^\mone \Pi_{h,e}^T,
\end{equation}
where $\cB_{ee}$ is a symmetric preconditioner for $\cA_{ee}$; cf. \cite{2007DD,1996AuxiliarySpace,VassilevskiMG}.

\begin{theorem}[spectral equivalence]\label{thm:fictopt}
Assume that $\cB_{ee}$ is a spectrally equivalent preconditioner for $\cA_{ee}$ in the sense that there exist positive constants $\alpha$ and $\beta$ such that
\begin{equation}\label{eq:spectequiv}
\alpha^\mone\, \bhv^T \cA_{ee} \bhv \le \bhv^T \cB_{ee} \bhv \le \beta\, \bhv^T \cA_{ee} \bhv,\quad\forall\bhv \in \tcE^h.
\end{equation}
Then, $\hB$ in \eqref{eq:fictprec} is spectrally equivalent to $A$ in \eqref{eq:H1linsys}.
\end{theorem}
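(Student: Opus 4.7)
The plan is to apply the standard fictitious (auxiliary) space lemma (in the spirit of Nepomnyaschikh, cf.\ \cite{2007DD,1996AuxiliarySpace}) with the constrained subspace $\tcE^h$, equipped with the $\cA_{ee}$-inner product (which is genuinely SPD there), playing the role of the auxiliary Hilbert space and with $\Pi_{h,e}\col \tcE^h \mapsto \cU^h$ playing the role of the transfer operator. All three ingredients that framework needs are already in place in the excerpt: surjectivity of $\Pi_{h,e}$ via the right-inverse $\cI_{h,e}$ from \eqref{eq:approx}, $\cA_{ee}$-stability of $\cI_{h,e}$ from \eqref{eq:stable}, and continuity of $\Pi_{h,e}$ on $\tcE^h$ from \cref{cor:cont}. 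The task is thus essentially to string these pieces together.

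The first concrete step is to fix the variational meaning of $\cB_{ee}^\mone$ in $\hB^\mone = \Pi_{h,e}\cB_{ee}^\mone\Pi_{h,e}^T$, analogously to \eqref{eq:cAeeinv}: interpret $\cB_{ee}^\mone \bg \in \tcE^h$ as the solution of the Galerkin problem on $\tcE^h$, which is well-posed by the left inequality in \eqref{eq:spectequiv}. This yields the standard minimization characterization
\[
\bv^T \hB \bv \;=\; \min\bigl\{\, \bhw^T \cB_{ee} \bhw \,:\, \bhw \in \tcE^h,\; \Pi_{h,e}\bhw = \bv \,\bigr\}, \qquad \bv \in \cU^h,
\]
whose feasible set is nonempty since $\Pi_{h,e}\cI_{h,e}\bv = \bv$ by \eqref{eq:approx}. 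From this characterization the two-sided bound is routine. For the upper bound on $\hB$, use the admissible candidate $\bhw = \cI_{h,e}\bv$; combining \eqref{eq:stable} with the right inequality in \eqref{eq:spectequiv} immediately gives $\bv^T \hB \bv \le \beta\, \bv^T A \bv$. For the lower bound, let $\bhv^* \in \tcE^h$ denote the minimizer; from $\Pi_{h,e}\bhv^* = \bv$ apply \cref{cor:cont} to transfer $\bv^T A \bv$ into the auxiliary $\cA_{ee}$-norm, and then the left inequality in \eqref{eq:spectequiv} to pass from $\cA_{ee}$ to $\cB_{ee}$, arriving at $\bv^T A \bv \le 2\alpha(1 + 2\varkappa^2\Lambda K)\, \bv^T \hB \bv$.

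I expect the main subtlety, rather than any hard obstacle, to be justifying that minimization characterization in the first place: since $\cA_{ee}$ is only SPSD on $\cE^h$ and positive definite on the constrained subspace $\tcE^h$, one must be careful to interpret $\hB^\mone$ so that $\cB_{ee}^\mone$ indeed maps into $\tcE^h$, which is precisely what makes $\tcE^h$ (and not all of $\cE^h$) the effective auxiliary space in which the lemma applies. Once this is pinned down, the resulting spectral-equivalence constants depend only on $\alpha$, $\beta$, $\varkappa$, $\Lambda$, and $K$, and are in particular independent of $h$.
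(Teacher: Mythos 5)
Your proposal is correct and follows essentially the same route as the paper: the paper's proof is a one-line appeal to the analogous result in \cite{IPpreconditioner}, deriving the upper bound $\bv^T\hB\bv\le\beta\,\bv^T A\bv$ from \eqref{eq:spectequiv}, \eqref{eq:stable}, and \eqref{eq:approx} (i.e., the stable right-inverse $\cI_{h,e}$) and the lower bound $\bv^T A\bv\le 2\alpha(1+2\varkappa^2\Lambda K)\,\bv^T\hB\bv$ from \eqref{eq:spectequiv} and \cref{cor:cont}, exactly as you do via the fictitious-space minimization characterization. Your explicit attention to interpreting $\cB_{ee}^\mone$ as mapping into $\tcE^h$ (mirroring \eqref{eq:cAeeinv}) is a detail the paper leaves implicit, but it does not change the argument or the constants.
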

\begin{proof}
Similar to \cite[Theorem 3.4]{IPpreconditioner}, \eqref{eq:spectequiv}, \eqref{eq:stable}, and \eqref{eq:approx} imply $\bv^T \hB \bv \le \beta\, \bv^T A \bv$ for all $\bv \in \cU^h$. Conversely, \eqref{eq:spectequiv} and \cref{cor:cont} provide $\bv^T A \bv \le 2\alpha(1 + 2\varkappa^2\Lambda K)\, \bv^T \hB \bv$ for all $\bv \in \cU^h$.
\end{proof}

As in \cite{IPpreconditioner}, it is not difficult to see that the addition of a smoother in \eqref{eq:tB1} does not violate the spectral equivalence in \cref{thm:fictopt}.
\begin{corollary}[spectral equivalence]\label{cor:spectequiv}
Let the smoother $M$ satisfy the property that $M + M^T - A$ is SPD. In the fine-scale setting, i.e., $\Pi_{H,e} = \Pi_{h,e}$, the preconditioners in \eqref{eq:tB1} are spectrally equivalent to $A$. This also holds when $\cA_{ee}^{-1}$ is replaced by $\cB_{ee}^{-1}$ satisfying \eqref{eq:spectequiv} in \eqref{eq:tB1}.
\end{corollary}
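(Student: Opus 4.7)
The plan is to verify the hypotheses of the abstract auxiliary-space preconditioning framework of \cite[Theorem 7.18]{VassilevskiMG}, which, together with \cref{thm:fictopt}, yields the desired spectral equivalences. In the fine-scale setting $\Pi_{H,e} = \Pi_{h,e}$, the fictitious-space preconditioner $\hB = \Pi_{h,e}\cA_{ee}^\mone \Pi_{h,e}^T$ is already known (by \cref{thm:fictopt}, possibly after replacing $\cA_{ee}^\mone$ by a $\cB_{ee}^\mone$ satisfying \eqref{eq:spectequiv}) to satisfy $c_1 A^\mone \le \hB^\mone \le c_2 A^\mone$ in the Loewner order, with constants depending only on those in \eqref{eq:bdrapprox}, \eqref{eq:meshreg}, and the shape regularity of $\cT^h$. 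The remaining work is to show that adding the smoother term $\ol M^\mone$ preserves these two-sided bounds.

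The first key ingredient is the standard smoother bound: from $M + M^T - A$ being SPD, the identity $\ol M^\mone = M^{-T}(M + M^T - A)M^\mone$ immediately yields $A \le \ol M$ and hence $\ol M^\mone \le A^\mone$ in the Loewner order. Using this for the additive preconditioner gives
\[
\hB^\mone \;\le\; \tB_{\mathrm{add}}^\mone \;=\; \ol M^\mone + \hB^\mone \;\le\; (1 + c_2)\,A^\mone,
\]
while the lower bound $\hB^\mone \ge c_1 A^\mone$ transfers directly, so $c_1 A^\mone \le \tB_{\mathrm{add}}^\mone \le (1 + c_2)\,A^\mone$. The substitution $\cA_{ee}^\mone \leftrightarrow \cB_{ee}^\mone$ changes these constants only by factors of $\alpha$ and $\beta$ from \eqref{eq:spectequiv}, since $\cB_{ee}^\mone$ differs from $\cA_{ee}^\mone$ spectrally by the same factors on $\tcE^h$ and $\Pi_{h,e}^T$ lands in $\tcE^h$ via the injection $\cI_{h,e}$ of \eqref{eq:stable}.

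The multiplicative case is the harder part, and I would proceed via the standard identity for the error propagator,
\[
I - \tB_{\mathrm{mult}}^\mone A \;=\; (I - M^{-T} A)\,(I - \Pi_{h,e}\cA_{ee}^\mone \Pi_{h,e}^T A)\,(I - M^\mone A),
\]
and bound its $A$-norm. The outer factors contract in the $A$-norm by precisely the amount encoded in $A \le \ol M$; the middle factor is an $A$-based ``projection''-type operator associated with the auxiliary correction, whose stability follows from the continuity of $\Pi_{h,e}$ in \cref{cor:cont} together with \eqref{eq:stable} and \eqref{eq:approx}. Multiplying the three contraction factors and using the additive bound just established (the standard ``product of iteration operators'' step in \cite[Theorem 7.18]{VassilevskiMG} or the parallel treatment in \cite{IPpreconditioner}) yields a uniform $A$-norm contraction strictly less than $1$, and hence the spectral equivalence of $\tB_{\mathrm{mult}}$ to $A$.

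The main technical obstacle is bookkeeping: every auxiliary-space estimate must be carried out on the constrained subspace $\tcE^h \subset \cE^h$, on which $\cA_{ee}$ is SPD, rather than on the full $\cE^h$ where $\cA_{ee}$ is only SPSD. This is precisely resolved by working with the injection $\cI_{h,e}\col \cU^h \to \tcE^h$ and with $\cA_{ee}^\mone$ defined variationally through \eqref{eq:cAeeinv}, which together guarantee $\Pi_{h,e}\cA_{ee}^\mone \Pi_{h,e}^T = \Pi_H \cA^\mone \Pi_H^T$ and that all quantities appearing in the bounds above are meaningful and restricted to $\tcE^h$. Once this restriction is maintained throughout, the arguments carry over unchanged when $\cA_{ee}^\mone$ is replaced by any $\cB_{ee}^\mone$ satisfying \eqref{eq:spectequiv}, and the spectral-equivalence constants remain independent of $h$.
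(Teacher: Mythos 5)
Your overall strategy --- establish the fictitious-space equivalence via \cref{thm:fictopt} and then show that adding the smoother term preserves it, using only that $M+M^T-A$ SPD gives $A \le \ol M$ --- is exactly the route the paper intends (it defers the details to \cite[Theorem 7.18]{VassilevskiMG} and the parallel argument in \cite{IPpreconditioner}), and your additive argument is correct as written.

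The flaw is in the multiplicative step. Having written $I - \tB_{\mathrm{mult}}^\mone A = (I-M^{-T}A)\,\tilde E\,(I-M^\mone A)$ with $\tilde E = I - \Pi_{h,e}\cA_{ee}^\mone\Pi_{h,e}^T A$ (the identity itself is correct), you claim the product is ``a uniform $A$-norm contraction strictly less than $1$.'' That does not follow: $\tilde E$ is $A$-self-adjoint with $A$-spectrum in $[1-c_2,\,1-c_1]$, where $c_1 A^\mone \le \hB^\mone \le c_2 A^\mone$, and the only available bound $\lV\tilde E\rV_A \le \max\{1-c_1,\,c_2-1\}$ exceeds $1$ whenever $c_2>2$ --- which is the generic situation, since \cref{thm:fictopt} yields $c_2 = 2\alpha(1+2\varkappa^2\Lambda K)$. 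So no contraction is available, nor is one needed: spectral equivalence does not require the stationary iteration to converge. What the product form does give, using only $\lV I-M^\mone A\rV_A\le 1$ and the two-sided $A$-spectral bounds on $\tilde E$, is
\[
\min\{0,\,1-c_2\}\;\bv^T A\bv \;\le\; \bv^T A\,(I-\tB_{\mathrm{mult}}^\mone A)\,\bv \;\le\; \max\{0,\,1-c_1\}\;\bv^T A\bv,
\]
hence $\min\{1,c_1\}\le\lambda(\tB_{\mathrm{mult}}^\mone A)\le\max\{1,c_2\}$, which is the claimed equivalence. Alternatively, and closer to the structure of \cite[Theorem 7.18]{VassilevskiMG}, avoid the propagator entirely: with $\bw=(I-AM^\mone)\bv$ one has $\bv^T\tB_{\mathrm{mult}}^\mone\bv=\bv^T\ol M^\mone\bv+\bw^T\hB^\mone\bw$, and the identity $(I-M^{-T}A)A^\mone(I-AM^\mone)=A^\mone-\ol M^\mone$ (an SPSD matrix, by $A\le\ol M$) sandwiches this between $\min\{1,c_1\}\,\bv^TA^\mone\bv$ and $\max\{1,c_2\}\,\bv^TA^\mone\bv$. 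Either repair keeps your bookkeeping on $\tcE^h$ and the substitution $\cA_{ee}^\mone\to\cB_{ee}^\mone$ (with constants rescaled by $\alpha,\beta$ from \eqref{eq:spectequiv}) intact.
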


There are a couple of additional assumptions on the smoother $M$ in \cite[Theorem 7.18]{VassilevskiMG}. However, they are not necessary in \cref{cor:spectequiv} due to the exactness of the approximation \eqref{eq:approx}. Only the basic property of $A$-convergence (convergence in the norm induced by $A$) of the iteration with $M$ (i.e., $M + M^T - A$ being SPD) is assumed. Nevertheless, when a coarse auxiliary space $\tcE^H$ is utilized, smoothing is necessary.

\subsection*{Convergence and coefficient dependence}

A short formal discussion addressing the dependence of the constant $K > 0$ in \eqref{eq:bdrapprox} on the coefficient $\kappa$ in \eqref{eq:pde} (cf. \cref{rem:approx}) is in order now. Such dependence is determined by the properties of the trace space $\cF^H$. Some intuitive abuse of notation is utilized, which should not lead to confusion. Denote by $\lV \cdot \rV_{a} = \sqrt{a(\cdot,\cdot)}$ the norm induced by the bilinear form $a(\cdot,\cdot)$ in \eqref{eq:weakform}. Consider \eqref{eq:weakform} with an analytic solution $u$ and a finite element approximation $\bu \in \cU^h$ obtained via solving \eqref{eq:H1linsys}, i.e., $\bu = A^\mone \bff$, which is the $a$-orthogonal projection of $u$ onto $\cU^h$. Similarly, a finite element approximation $\btu \in \tcE^h$ such that $\btu = \cA_{ee}^\mone \Pi_{h,e}^T \bff$ is obtained via solving a problem of the type \eqref{eq:mortarminunconstr}. Standard interpolation bounds \cite{BrennerFEM}, the embedding $\cU^h \hookrightarrow \tcE^h$ (via the injection operator $\cI_{h,e}$), the properties of the mortar approach as a nonconforming discretization method \cite{2001MortarMultipliers}, and the continuity of $\Pi_{h,e}$ in \cref{cor:cont} imply an estimate for some independent of $h$ (but possibly dependent on $K$) constant $C > 0$
\[
\lV u - \bu \rV_a \le \lV u - \Pi_{h,e} \btu \rV_a \le C h^{s-1} \lV u \rV_s,
\]
where $\lV \cdot \rV_s$ for a real $s > 1$ is an appropriate Sobolev-type norm. The particular value of $s$ depends on the smoothness of the solution $u$, the finite element spaces, and the properties of the formulations. Results of this kind, although in a bit different setting, are shown in \cite{2001MortarMultipliers} for $s=2$. Thus, $\lV \Pi_{h,e} \cA_{ee}^\mone \Pi_{h,e}^T \bff - A^\mone \bff \rV_a \to 0$ as $h \to 0$ and, accordingly, $\bff^T \Pi_{h,e} \cA_{ee}^\mone \Pi_{h,e}^T \bff\, /\, \bff^T A^\mone \bff \to 1$ as $h \to 0$. This convergence is uniform in $\bff$ whenever $s$ can be bounded away from $1$. This is usually the case under standard regularity lifting properties associated with the solution of problems like \eqref{eq:weakform}. Consequently, as observed in the numerical results in \cref{sec:numerical}, the mortar-based preconditioner improves in quality as $h \to 0$. In short, when the reformulation itself provides a convergent discretization, which is usually the case with mortar-type methods, the obtained preconditioner improves in quality as the discretization is refined. Therefore, the mortar-based approach studied here can be used to derive preconditioning strategies where the coefficient dependence is remedied on sufficiently fine meshes.

\section{Static condensation}
\label{sec:condensation}

The idea here is to build a (block) preconditioner for $\cA$ in \eqref{eq:cA} by eliminating all {\Edofs} and Lagrangian multipliers in $\cA$ involving only local work and preconditioning the resulting Schur complement expressed only on the {\Bdofs}. This procedure is referred to as \emph{static condensation} since it involves ``condensing'' the formulation on the interfaces. Moreover, it maintains the optimality established in \cref{sec:analysisexact}, in a fine-scale setting, of the auxiliary space preconditioners.

Owing to the observations in \cref{lem:invert}, \eqref{eq:Schur}, and \cref{cor:invert}, the {\Edofs} and Lagrangian multipliers can be eliminated from $\cA$ to obtain a ``condensed'' formulation on $\cF^H$ alone. Indeed, using the Schur complements in \eqref{eq:Schur}, consider the block factorization
\[
\cA_T =
\lb
\begin{array}{c|c}
\begin{bmatrix}
\cA_{T,ee} &C^T_T\\
C_T&
\end{bmatrix}
&O\rule[-2.6ex]{0pt}{0pt}\\
\hline\rule{0pt}{2.6ex}
\begin{bmatrix}
O \;\;\,&-X_T^T
\end{bmatrix}
 & \Sigma_T
\end{array}
\rb
\lb
\begin{array}{c|c}
\begin{bmatrix}
I &\\
&I
\end{bmatrix}
&\begin{bmatrix}
\cA_{T,ee} & C^T_T  \\
C_T&
\end{bmatrix}^\mone
\begin{bmatrix}
O\\
-X_T
\end{bmatrix}\rule[-2.6ex]{0pt}{0pt}\\
\hline\rule{0pt}{2.6ex}
O &I
\end{array}
\rb.
\]
Let $S$ be an SPD preconditioner for the global Schur complement $\Sigma$. Since $\Sigma$ is an SPD matrix, which can be assembled from local SPSD matrices $\Sigma_T$ (\cref{cor:invert}), AMGe methods are natural candidates for obtaining $S$. Consequently, the following symmetric, generally indefinite, preconditioner for $\cA$ is obtained:
\begin{equation}\label{eq:cBsc}
\cB_\mathrm{sc}^\mone =
\lb
\begin{array}{c|c}
\begin{bmatrix}
I &\\
&I
\end{bmatrix}
&-\begin{bmatrix}
\cA_{ee} & C^T \\
C&
\end{bmatrix}^\mone
\begin{bmatrix}
O\\
-X
\end{bmatrix}\rule[-2.6ex]{0pt}{0pt}\\
\hline\rule{0pt}{2.6ex}
O &I
\end{array}
\rb
\lb
\begin{array}{c|c}
\begin{bmatrix}
\cA_{ee} &C^T\\
C&
\end{bmatrix}^\mone
&O\rule[-2.6ex]{0pt}{0pt}\\
\hline\rule{0pt}{2.6ex}
-S^\mone\, [O, -X^T] \,\fA^\mone & S^\mone
\end{array}
\rb,
\end{equation}
where $\fA$ is defined in \eqref{eq:ucA}. The preconditioner $\cB_\mathrm{sc}$ is to be utilized within the auxiliary space preconditioners in \eqref{eq:Badd} and \eqref{eq:Bmult}. Applying the action of $\cB_\mathrm{sc}^\mone$ involves invoking $S^\mone$ once and twice $\fA^\mone$, computable via local operations on all $T \in \cT^H$. Owing to \cite[formula (3.8)]{2005SaddleProblems}, it is easy to see that $\Pi_H \cB_\mathrm{sc}^\mone \Pi_H^T$ is SPSD, providing the following result.

\begin{proposition}
The preconditioners $B^\mone_{\mathrm{add}}$ in \eqref{eq:Badd} and $B^\mone_{\mathrm{mult}}$ in \eqref{eq:Bmult}, with $\cB = \cB_\mathrm{sc}$, are SPD.
\end{proposition}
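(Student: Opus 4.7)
The plan is to reduce both claims to the single fact that $\Pi_H \cB_{\mathrm{sc}}^\mone \Pi_H^T$ is SPSD. Granted this, $B^\mone_{\mathrm{add}}$ is SPD as a sum of the SPD matrix $\ol{M}^\mone$ (SPD by the hypothesis that $M+M^T-A$ is SPD) and the SPSD $\Pi_H \cB_{\mathrm{sc}}^\mone \Pi_H^T$. For $B^\mone_{\mathrm{mult}}$, I would use the symmetry $A=A^T$ to obtain $(I-AM^\mone)^T = I - M^{-T}A$, so that the correction term in \eqref{eq:Bmult} has the form $Y^T Z Y$ with $Y \ceq I - AM^\mone$ and $Z \ceq \Pi_H \cB_{\mathrm{sc}}^\mone \Pi_H^T$ SPSD; this correction is then SPSD, and added to the SPD $\ol{M}^\mone$ yields SPD.

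To establish the SPSD property of $\Pi_H \cB_{\mathrm{sc}}^\mone \Pi_H^T$, I would first carry out the block product in \eqref{eq:cBsc} explicitly. Setting $V \ceq \fA^\mone [O, -X]^T$ and using the symmetry of $\fA$ (so that $[O, -X^T]\fA^\mone = V^T$), a direct multiplication produces the manifestly symmetric expression
\[
\cB_{\mathrm{sc}}^\mone = \begin{bmatrix} \fA^\mone + V S^\mone V^T & -V S^\mone \\ -S^\mone V^T & S^\mone \end{bmatrix}.
\]
Because $\Pi_H = [\Pi_{H,e},\,O,\,O]$ picks out only the $\cE^H$-component (sitting inside the upper-left $\fA$-block), one obtains
\[
\Pi_H \cB_{\mathrm{sc}}^\mone \Pi_H^T = \Pi_{H,e} \bigl[\,(\fA^\mone)_{11} + (\fA^\mone)_{12}\, X\, S^\mone\, X^T (\fA^\mone)_{21} \,\bigr] \Pi_{H,e}^T,
\]
where the subscripts refer to the $\cE^H \oplus [\cF^H]^2$ block partition of $\fA$. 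The second summand in the brackets is SPSD, being a symmetric quadratic form in the SPD matrix $S^\mone$.

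The remaining and central step, which I expect to be the main obstacle, is to show that $(\fA^\mone)_{11}$ is SPSD. This is precisely the content of \cite[formula (3.8)]{2005SaddleProblems} applied to the symmetric saddle-point matrix $\fA = \begin{bmatrix} \cA_{ee} & C^T \\ C & 0 \end{bmatrix}$, which is invertible under the hypotheses already verified in \cref{lem:invert}. When $\cA_{ee}$ happens to be SPD, a direct block inversion reveals $(\fA^\mone)_{11} = \cA_{ee}^\mhalf(I - \Pi_C)\cA_{ee}^\mhalf$, with $\Pi_C \ceq \cA_{ee}^\mhalf C^T (C\cA_{ee}^\mone C^T)^\mone C \cA_{ee}^\mhalf$ an orthogonal projection, making SPSD transparent. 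The technical subtlety is that in the present setting $\cA_{ee}$ is in general only SPSD (some $\cA_{T,ee}$ may be singular on agglomerates not touched by Dirichlet data); this is where the general projection-based formula \cite[(3.8)]{2005SaddleProblems} is needed, or, equivalently, one may regularize $\cA_{ee} \mapsto \cA_{ee} + \varepsilon I$, apply the explicit SPD formula above, and let $\varepsilon \downarrow 0$, the SPSD property being preserved in the limit.
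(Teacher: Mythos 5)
Your proposal is correct and follows essentially the same route as the paper, which disposes of the proposition by noting (via \cite[formula (3.8)]{2005SaddleProblems}) that $\Pi_H \cB_\mathrm{sc}^\mone \Pi_H^T$ is SPSD and then adding the SPD $\ol{M}^\mone$. You simply make explicit the block multiplication of \eqref{eq:cBsc} and the handling of the only-SPSD $\cA_{ee}$ (by the general projection formula or regularization), details the paper leaves to the cited reference.
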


Based on the general analysis of \cref{sec:analysisexact}, it is now demonstrated the optimality, in the fine-scale setting, of the preconditioner choice in \eqref{eq:cBsc} depending on the quality of preconditioning the Schur complement.

\begin{theorem}[spectral equivalence]\label{thm:scopt}
Let the smoother $M$ satisfy the property that $M + M^T - A$ is SPD and consider the fine-scale setting, i.e., $\cE^H = \cE^h$ and $\Pi_H = \Pi_h$. If $S$ is spectrally equivalent to $\Sigma$, then the preconditioners in \eqref{eq:Badd} and \eqref{eq:Bmult}, with $\cB = \cB_\mathrm{sc}$, are spectrally equivalent to $A$.
\end{theorem}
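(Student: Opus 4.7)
The plan is to reduce the statement to \cref{cor:spectequiv}. In the fine-scale setting, the identity $\Pi_H \cA^\mone \Pi_H^T = \Pi_{h,e}\cA_{ee}^\mone\Pi_{h,e}^T$ (noted in the paragraph preceding \eqref{eq:tB1}) combined with \cref{cor:spectequiv} already guarantees that the ``exact'' preconditioners $\tB^\mone_\mathrm{add}$ and $\tB^\mone_\mathrm{mult}$ of \eqref{eq:tB} are spectrally equivalent to $A^\mone$. Observing that $B^\mone_\mathrm{add}$ (resp.\ $B^\mone_\mathrm{mult}$) differs from $\tB^\mone_\mathrm{add}$ (resp.\ $\tB^\mone_\mathrm{mult}$) only in that $\Pi_H \cB_\mathrm{sc}^\mone \Pi_H^T$ replaces $\Pi_H \cA^\mone \Pi_H^T$, it suffices to prove that, as SPSD operators on $\cU^h$,
\[
\Pi_H \cB_\mathrm{sc}^\mone \Pi_H^T \;\sim\; \Pi_H \cA^\mone \Pi_H^T,
\]
with equivalence constants depending only on those of $S \sim \Sigma$. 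That equivalence then transfers to $B^\mone_{\mathrm{add}} \sim \tB^\mone_{\mathrm{add}}$ and $B^\mone_{\mathrm{mult}} \sim \tB^\mone_{\mathrm{mult}}$ via the common additive and conjugation terms.

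For the key comparison, regroup $\cA$ of \eqref{eq:cA} as a $2 \times 2$ block matrix with $\fA$ from \eqref{eq:ucA} as leading block and $Z \ceq [O,-X]^T$ as off-diagonal; by \eqref{eq:Schur}, the Schur complement of $\fA$ in $\cA$ equals $-Z^T\fA^\mone Z = \Sigma$. Standard block inversion then gives
\[
\cA^\mone = \begin{pmatrix}\fA^\mone + \fA^\mone Z\,\Sigma^\mone Z^T \fA^\mone & -\fA^\mone Z\,\Sigma^\mone \\ -\Sigma^\mone Z^T \fA^\mone & \Sigma^\mone\end{pmatrix},
\]
and direct expansion of the product defining $\cB_\mathrm{sc}^\mone$ in \eqref{eq:cBsc} shows that $\cB_\mathrm{sc}^\mone$ has exactly this form with $\Sigma^\mone$ replaced everywhere by $S^\mone$. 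Since $\Pi_H = [\Pi_{H,e}, O, O]$ is supported on the first ($\fA$-sized) block, setting $\Pi_\fA \ceq [\Pi_{H,e}, O]$, $T_0 \ceq \Pi_\fA \fA^\mone \Pi_\fA^T$, and $G \ceq \Pi_\fA \fA^\mone Z$, we obtain the parallel representations
\[
\Pi_H \cA^\mone \Pi_H^T = T_0 + G\,\Sigma^\mone G^T, \qquad \Pi_H \cB_\mathrm{sc}^\mone \Pi_H^T = T_0 + G\,S^\mone G^T.
\]
A brief saddle-point computation shows $\bv^T T_0 \bv = (\bu^*)^T \cA_{ee} \bu^* \ge 0$, where $\bu^* \in \ker C$ solves $\cA_{ee}\bu^* + C^T\bmu^* = \Pi_{H,e}^T \bv$ for some $\bmu^*$, so $T_0$ is SPSD.

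The hypothesis $\alpha^\mone \Sigma \le S \le \beta\,\Sigma$ inverts to $\beta^\mone\Sigma^\mone \le S^\mone \le \alpha\,\Sigma^\mone$; congruence by $G^T$ preserves the bound, giving $\beta^\mone\,G\Sigma^\mone G^T \le G S^\mone G^T \le \alpha\,G\Sigma^\mone G^T$. Invoking the elementary fact that if $c_1 U \le V \le c_2 U$ in the SPSD order and $W \ge 0$ is SPSD, then $\min(1,c_1)(U+W) \le V+W \le \max(1,c_2)(U+W)$, we add the common SPSD term $T_0$ to obtain $\Pi_H \cB_\mathrm{sc}^\mone \Pi_H^T \sim \Pi_H \cA^\mone \Pi_H^T$. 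For the additive case, a further application with $W = \ol{M}^\mone$ yields $B^\mone_\mathrm{add} \sim \tB^\mone_\mathrm{add}$; for the multiplicative case, conjugation by $R \ceq I - AM^\mone$ on one side and by $R^T$ on the other transforms both terms identically so the equivalence survives, after which adding $\ol{M}^\mone$ gives $B^\mone_\mathrm{mult} \sim \tB^\mone_\mathrm{mult}$. Combining with \cref{cor:spectequiv} yields $B^\mone_\mathrm{add}, B^\mone_\mathrm{mult} \sim A^\mone$, as desired. The only real work is the sign/transpose bookkeeping identifying $\Sigma$ with $-Z^T\fA^\mone Z$ and verifying by direct expansion that \eqref{eq:cBsc} reproduces the block-inverse formula with $\Sigma^\mone$ replaced by $S^\mone$; both are mechanical once the regrouping of $\cA$ into $\fA$ and the $\cF^H$-block is in place.
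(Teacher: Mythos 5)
Your proposal is correct and follows essentially the same route as the paper: both rest on the block-inverse representation of $\cA^\mone$ with the Schur complement $\Sigma$, the observation that \eqref{eq:cBsc} reproduces that representation with $\Sigma^\mone$ replaced by $S^\mone$, the transfer of the equivalence $S\sim\Sigma$ to the inverses plus a common nonnegative term, and the final appeal to \cref{cor:spectequiv}. The only (harmless) difference is that you perform the comparison after composing with $\Pi_H$ and explicitly verify that the common term $T_0$ is SPSD, whereas the paper compares $\cB_{\mathrm{sc},ee}^\mone$ with $\cA_{ee}^\mone$ directly and leaves that positivity implicit.
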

\begin{proof}
In view of the considerations in \cref{sec:analysisexact} leading to \eqref{eq:tB1}, the operator $\cA_{ee}^\mone\col \cE^h \mapsto \tcE^h$, defined via the minimization \eqref{eq:mortarminunconstr} or the corresponding weak form \eqref{eq:cAeeinv}, satisfies $\cA_{ee}^\mone = [I, O, O]\, \cA^\mone \,[I, O, O]^T$, where $I$ is the identity on $\cE^h$. Expressing $\cA^\mone$ similarly to \eqref{eq:cBsc} (by replacing $S$ with $\Sigma$) provides
\[
\cA_{ee}^\mone = [I, O] \lp \fA^\mone + \fA^\mone\, [O, -X^T]^T\, \Sigma^\mone\, [O, -X^T]\, \fA^\mone \rp [I, O]^T.
\]
Define also $\cB_{\mathrm{sc},ee}^\mone\col \cE^h \mapsto \tcE^h$ as $\cB_{\mathrm{sc},ee}^\mone = [I, O, O]\, \cB_\mathrm{sc}^\mone\, [I, O, O]^T$. Hence, due to \eqref{eq:cBsc},
\[
\cB_{\mathrm{sc},ee}^\mone = [I, O] \lp \fA^\mone + \fA^\mone\, [O, -X^T]^T\, S^\mone\, [O, -X^T]\, \fA^\mone \rp [I, O]^T.
\]
Now, let $\gamma\, \bhv_b^T \Sigma^\mone \bhv_b \le \bhv_b^T S^\mone \bhv_b \le \delta\, \bhv_b^T \Sigma^\mone \bhv_b$ for some positive constants $\gamma$ and $\delta$ and all $\bhv_b \in \cF^H$. Then, it is easy to see that $\min\{1,\gamma\}\, \bhv^T \cA_{ee}^\mone \bhv \le \bhv^T \cB_{\mathrm{sc},ee}^\mone \bhv \le \max\{1,\delta\}\, \bhv^T \cA_{ee}^\mone \bhv$ for all $\bhv \in \tcE^h$ (actually, for all  $\bhv \in \cE^h$). Thus, \cref{thm:fictopt} implies that the ``fictitious space preconditioner'' $\Pi_h \cB_\mathrm{sc}^\mone \Pi_h^T = \Pi_{h,e} \cB_\mathrm{sc,ee}^\mone \Pi_{h,e}^T$ is spectrally equivalent to $A$ and the respective result for the preconditioners in \eqref{eq:Badd} and \eqref{eq:Bmult} is due to \cref{cor:spectequiv}.
\end{proof}

\begin{remark}
Notice that the proofs in this paper are algebraic in nature and the particular form of the model problem \eqref{eq:pde}, or \eqref{eq:weakform}, and its properties (particularly, that it is an elliptic PDE) are not utilized. Thus, the mortar reformulation is applicable and its properties are maintained for quite general SPD systems (i.e., convex quadratic minimization problems) that can be associated with appropriate local SPSD versions as long as the interface space $\cF^H$ is selected appropriately to avoid over-constraining the problem (important for \cref{lem:invert} and the sensibility of the formulation) and to provide a trace ``approximation property'' like \eqref{eq:bdrapprox} (important for \cref{lem:approx} and the quality of the auxiliary space preconditioners).
\end{remark}

\section{Numerical examples}
\label{sec:numerical}

\begin{figure}
\subfloat[][Coefficient in 3D]{\includegraphics[width=0.485\textwidth]{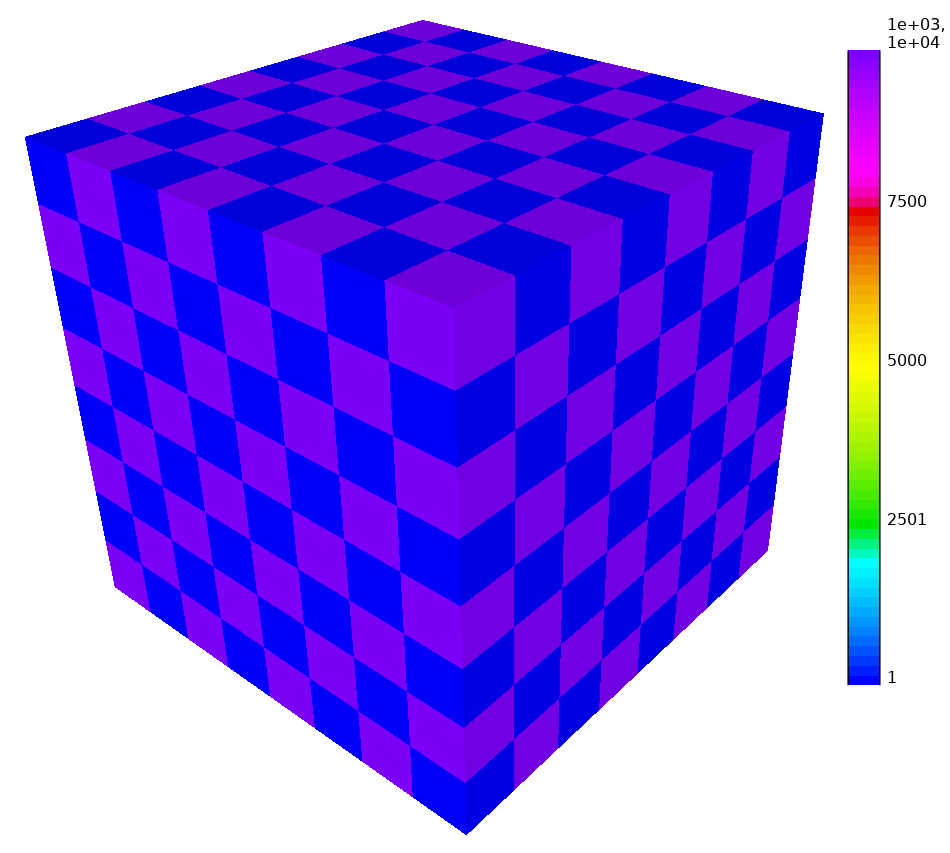}\label{fig:coefficient}}\quad
\subfloat[][Coefficient in 2D]{\includegraphics[width=0.485\textwidth]{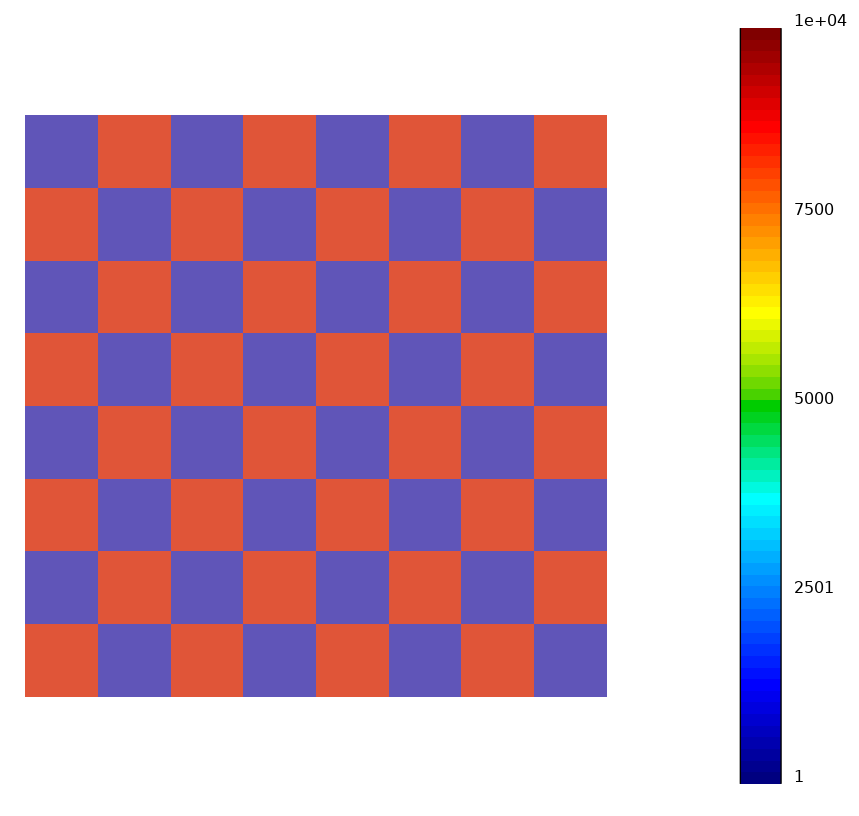}\label{fig:coefficient2D}}
\caption[]{The coefficient $\kappa$ in \eqref{eq:pde}.}
\end{figure}

This section is devoted to numerical results showing two test cases: using low and high polynomial order finite element spaces. The test setting is discussed first.

\subsection{Test setting}

Consider \eqref{eq:pde} with $f \equiv 1$ and the coefficients $\kappa$ with jumps of the kind in \cref{fig:coefficient,fig:coefficient2D}. Here, $\cT^h$ and $\cT^H$ for the mortar reformulation are regular and of the kind shown in \cref{fig:ELEMENTS,fig:ELEMENTS2D}. Note that the {\Elements} in \cref{fig:ELEMENTS,fig:ELEMENTS2D} get accordingly refined as the mesh is refined, i.e., refining $\cT^h$ leads to a respective refinement of $\cT^H$. Nevertheless, the coefficients in \cref{fig:coefficient,fig:coefficient2D} remain unchanged with respect to mesh refinement.

Multigrid methods are invoked for solving or preconditioning the mortar problem in the auxiliary space preconditioners of \cref{sec:auxprec} via employing static condensation, as described in \cref{sec:condensation}, and solving or preconditioning the respective Schur complement problem. A particular multigrid solver employed here is the spectral AMGe method described in \cite[Section 5]{IPpreconditioner}, as implemented in the SAAMGE library \cite{saamge}, taking advantage of the {\Element}-by-{\Element} assembly property of the modified mortar formulation. Any further agglomeration required by the AMGe method is constructed by invoking METIS \cite{metis}. Note that AMGe applied to the mortar form generates a hierarchy of meshes (cf. \cref{ssec:meshagg}), a respective hierarchy of nonconforming spaces (cf. \cref{ssec:spaces}) on the meshes, and a respective hierarchy of mortar formulations (cf. \cref{sec:mortar}), condensed or not, on the spaces.

Having in mind that only the condensed mortar form is presently employed, a few measures of operator complexity (OC), representing relative sparsity in the obtained operator hierarchies, are reported. Namely, the OC of the condensed mortar reformulation:
\[
\mathrm{OC}_\mathrm{m} = 1 + \NNZ(\Sigma)/\NNZ(A),
\]
where $\Sigma$ is the one in \eqref{eq:Schur}; the OC of the ``auxiliary'' multigrid hierarchy relative to the condensed mortar matrix:
\[
\mathrm{OC}_{\mathrm{aux}} = 1 + \sum_{l=1}^{n_\ell} \NNZ(\Sigma^l) / \NNZ(\Sigma);
\]
and the total OC relative to the matrix $A$ in \eqref{eq:H1linsys}:
\[
\mathrm{OC}_{\mathrm{orig}} = 1 + \lp \Sigma + \sum_{l=1}^{n_\ell} \NNZ(\Sigma^l) \rp / \NNZ(A) = 1 + \mathrm{OC}_{\mathrm{aux}} \times (\mathrm{OC}_\mathrm{m} - 1).
\]
Here, NNZ denotes the number of nonzero entries in the sparsity pattern of a matrix and $n_\ell$ is the number of levels (excluding the finest one) in the ``auxiliary'' AMGe hierarchy. The matrices $\Sigma^l$ for $l \ge 1$ represent the coarse versions of $\Sigma$ in the AMGe solver hierarchy for the condensed mortar problem. Recall that dofs are associated with $\cU^h$ and the matrix $A$ in \eqref{eq:H1linsys}, whereas {\Bdofs} are related to $\cF^H$ and the matrix $\Sigma$ in \eqref{eq:Schur}.

In all cases, the preconditioned conjugate gradient (PCG) method and the respective auxiliary space preconditioners in \eqref{eq:Badd} and \eqref{eq:Bmult}, with $\cB^\mone = \cB_\mathrm{sc}^\mone$ in \eqref{eq:cBsc}, are applied for solving the linear system \eqref{eq:H1linsys} and the numbers of iterations $n_\mathrm{a,it}$ and $n_\mathrm{m,it}$ are reported for the respective additive and multiplicative preconditioners. The relative tolerance is $10^{-8}$ and the measure in the stopping criterion is $\br^T B^\mone \br$ for a current residual $\br$, where $B^\mone$ is the utilized preconditioner. The smoother in the end of \cref{sec:auxprec} is employed. Notice that the problem is reduced to the choice of $S^\mone$ as an approximate inverse of the respective $\Sigma$ in \cref{sec:condensation} to completely obtain the action of $\cB_\mathrm{sc}^\mone$.

In all tests, a fine-scale $\cE^H = \cE^h$ is used, while $\cF^H$ is selected as a piecewise polynomial space of a lower order. Note that $\cF^H$ is defined piecewise on the coarse-scale {\Faces}, \emph{not} on the fine-scale faces constituting a {\Face}. That is, e.g., if piecewise constants are used, there is a single constant basis function associated with each {\Face} $F$, \emph{not} multiple basis functions that would correspond to piecewise constants on the faces constituting $F$.

\subsection{Low order discretization}

A 3D mesh (of the type shown in \cref{fig:ELEMENTS}) is sequentially refined and spaces $\cU^h$ of piecewise linear finite elements are constructed. The nonconforming formulation (\cref{sec:mortar}) uses the following spaces: piecewise linear $\cE^H = \cE^h$ in the {\Elements} and piecewise constant $\cF^H$ on the {\Faces}. Static condensation (\cref{sec:condensation}) is relatively cheap in this case, involving the elimination of only a few edofs and Lagrangian multipliers per {\Element}. The smoother in \eqref{eq:M} is used throughout with $\nu = 4$ for the auxiliary space preconditioners. Also, when an AMGe hierarchy (\cite[Section 5]{IPpreconditioner}) is constructed, a single (smallest) eigenvector is taken from all local eigenvalue problems on all levels and \eqref{eq:M} is invoked with $\nu = 2$ as relaxation in the multigrid V-cycle. 

First, $\cB_\mathrm{sc}$ in \eqref{eq:cBsc} is employed with an (almost) exact inversion of the respective Schur complement $\Sigma$ (i.e., $S^\mone = \Sigma^\mone$), resulting in $\cB^\mone = \cB_\mathrm{sc}^\mone = \cA^\mone$ in the auxiliary space preconditioners. Results are shown in \cref{tbl:shurloworderexact}. It is interesting to observe that, as discussed in the end of \cref{sec:analysisexact}, the quality of the preconditioner improves as the mesh is refined, although it takes considerably longer for the additive method to enter such an ``asymptotic regime''. \Cref{tbl:shurlowordersaamge} shows results when $S^\mone$ is implemented via a fixed number of PCG iterations preconditioned by a single V-cycle of AMGe or BoomerAMG \cite{hypre}. Notice that, since the quality of approximation by the mortar method on coarser meshes is lower, initially solving the mortar problem exactly is not beneficial and the approximate inverse actually provides better results, but this is reversed as the mesh is refined and the respective mortar formulation starts producing higher quality approximations.

Notice that the multiplicative method performs substantially better for this hard problem, involving high-contrast coefficients. In view of \eqref{eq:Badd} and \eqref{eq:Bmult}, the smoothing is executed differently in $B^\mone_{\mathrm{add}}$ and $B^\mone_{\mathrm{mult}}$. Here, this results in faster and more robust convergence for the multiplicative method.

\begin{table}
\centering
\small
\captionsetup[subfloat]{width=0.5\textwidth}
\subfloat[][$\cB_\mathrm{sc}^\mone$ with $S^\mone = \Sigma^\mone$]
{\begin{tabular}{ | l | r | r | l | r | r | }
\hline
Refs & \# dofs & \# {\Bdofs} & $\mathrm{OC}_\mathrm{m}$ & $n_\mathrm{a,it}$ & $n_\mathrm{m,it}$ \\
\hline
0 & 4913 & 1344 & 1.400 & 62 & 20 \\
\hline
1 & 35937 & 11520 & 1.487 & 69 & 28 \\
\hline
2 & 274625 & 95232 & 1.536 & 75 & 24 \\
\hline
3 & 2146689 & 774144 & 1.562 & 83 & 22 \\
\hline
4 & 16974593 & 6242304 & 1.576 & 84 & 20 \\
\hline
5 & 135005697 & 50135040 & 1.582 & 86 & 18 \\
\hline
\end{tabular}\label{tbl:shurloworderexact}}
\\
\subfloat[][$\cB_\mathrm{sc}^\mone$ with $S^\mone$ -- four PCG iterations preconditioned by one BoomerAMG V-cycle (additive case) and two PCG iterations preconditioned by one AMGe V-cycle (multiplicative case)]
{\begin{tabular}{ | l | r | l | l | r | r | }
\hline
Refs & $n_\ell+1$ & $\mathrm{OC}_\mathrm{aux}$ & $\mathrm{OC}_{\mathrm{orig}}$ & $n_\mathrm{a,it}$ & $n_\mathrm{m,it}$ \\
\hline
0 & 3 & 1.520 & 1.608 & 64 & 23 \\
\hline
1 & 5 & 1.776 & 1.866 & 70 & 18 \\
\hline
2 & 6 & 1.814 & 1.973 & 77 & 25 \\
\hline
3 & 7 & 1.726 & 1.970 & 85 & 20 \\
\hline
4 & 8 & 1.631 & 1.939 & 92 & 28 \\
\hline
5 & 9 & 1.587 & 1.925 & 99  & 31 \\
\hline
\end{tabular}\label{tbl:shurlowordersaamge}}
\caption[]{Low order test results. The preconditioner $\cB_\mathrm{sc}^\mone$ in \eqref{eq:cBsc} is used, requiring an (approximate) inverse $S^\mone$ of the respective $\Sigma$. Here, either $S^\mone = \Sigma^\mone$ or $S^\mone$ is obtained invoking a fixed number of conjugate gradient iterations preconditioned by a multigrid V-cycle.}
\end{table}

\subsection{High order discretization}

\begin{table}
\centering
\small
\captionsetup[subfloat]{width=0.4\textwidth}
\subfloat[][$\cB_\mathrm{sc}^\mone$ with $S^\mone = \Sigma^\mone$]
{\begin{tabular}{ | l | l | r | r | l | r | r | }
\hline
$\cU^h$ and $\cE^H=\cE^h$ order & $\cF^H$ order & \# dofs & \# {\Bdofs} & $\mathrm{OC}_\mathrm{m}$ & $n_\mathrm{a,it}$ & $n_\mathrm{m,it}$ \\
\hline
2 & 1 & 16641 & 3968 & 1.364 & 39 & 15 \\
\hline
3 & 2 & 37249 & 5952 & 1.193 & 47 & 17 \\
\hline
4 & 3 & 66049 & 7936 & 1.140 & 60 & 21 \\
\hline
5 & 4 & 103041 & 9920 & 1.106 & 62 & 23 \\
\hline
6 & 4 & 148225 & 9920 & 1.058 & 72 & 26 \\
\hline
\end{tabular}}
\\
\captionsetup[subfloat]{width=0.6\textwidth}
\subfloat[][$\cB_\mathrm{sc}^\mone$ with three AMGe V-cycles as $S^\mone$]
{\begin{tabular}{ | l | l | r | l | l | r | r | }
\hline
$\cU^h$ and $\cE^H=\cE^h$ order & $\cF^H$ order & $n_\ell+1$ & $\mathrm{OC}_\mathrm{aux}$ & $\mathrm{OC}_{\mathrm{orig}}$ & $n_\mathrm{a,it}$ & $n_\mathrm{m,it}$ \\
\hline
2 & 1 & 2 & 1.250 & 1.455 & 48 & 17 \\
\hline
3 & 2 & 2 & 1.111 & 1.215 & 60 & 21 \\
\hline
4 & 3 & 4 & 1.710 & 1.240 & 65 & 26 \\
\hline
5 & 4 & 4 & 1.178 & 1.125 & 85 & 31 \\
\hline
6 & 4 & 5 & 1.495 & 1.087 & 90 & 31 \\
\hline
\end{tabular}}
\caption[]{High order test results. The preconditioner $\cB_\mathrm{sc}^\mone$ in \eqref{eq:cBsc} is used, requiring an (approximate) inverse $S^\mone$ of the respective $\Sigma$.}\label{tbl:highorder}
\end{table}

Results in 2D are shown using a fixed mesh of the type in \cref{fig:ELEMENTS2D} and increasing the polynomial order. The smoother in \eqref{eq:M} is used throughout with $\nu = 4$ for the auxiliary space preconditioners, while \eqref{eq:M} is invoked with $\nu = 2$ as relaxation in the multigrid V-cycle of the AMGe method (\cite[Section 5]{IPpreconditioner}). Since no mesh refinement is performed, no additional agglomeration is invoked for the AMGe hierarchy. That is, the {\Elements} for the mortar reformulation are maintained throughout the hierarchy and only the basis functions are reduced during the construction of the AMGe solver hierarchy. This is reminiscent of $p$-multigrid but in a spectral AMGe setting.

Results are shown in \cref{tbl:highorder}. Notice that even when utilizing the exact auxiliary space reformulation (the case of $\cB_\mathrm{sc}^\mone = \cA^\mone$), the number of iterations slightly increases. The quality of the preconditioner is dependent on the choice of the space $\cF^H$. While the method admits considerable flexibility in selecting $\cF^H$, the tests here utilize simple polynomial spaces which, as explained above, are piecewise defined on the coarse-scale {\Faces}, whereas $\cE^h$ is of piecewise polynomials on fine-scale entities. Therefore, as the polynomial order is increased, $\cF^H$ potentially provides relatively worse approximations of the traces of functions in $\cE^h$. This can be remedied by selecting richer spaces for $\cF^H$. Nevertheless, even the simple choices here provide good results. Since the multiplicative method provides better smoothing, it is not a surprise that it performs better and, while maintaining all other parameters the same, it leads to better  robustness with respect to deficiencies in the trace space  $\cF^H$ and with respect to the quality of the  preconditioner for the ``auxiliary'' mortar problem.

\section{Conclusions}
\label{sec:conclusion}
In this paper, we have proposed and studied a modified version of a mortar finite element discretization method by forming an additional finite element space of discontinuous functions on the interfaces between (agglomerate) elements, or subdomains, utilized for coupling local bilinear forms via equality constraints on the interfaces. This modification allows for the agglomerate-by-agglomerate or subdomain-by-subdomain assembly. Then, the resulting modified mortar formulation and a respective reduced version obtained via static condensation on the interfaces are used in combination with polynomial smoothers for the construction of auxiliary space preconditioners. They are analyzed and their proven mesh-independent spectral equivalence is demonstrated in numerical results for 2D and 3D second order scalar elliptic PDEs, including the applicability for high order finite element discretizations. The local structure of the modified condensed mortar bilinear form, providing an agglomerate-by-agglomerate assembly property, is further useful in the obtainment of element-based algebraic multigrid (AMGe) utilized to approximate the inverse of the Schur complement in the auxiliary space preconditioners that involve static condensation. A possible practical extension of this work is the application of the proposed auxiliary space preconditioners in the setting of ``matrix-free'' solvers for high order finite element discretizations in combination with AMGe coarse solves and polynomial smoothers like the one outlined in the end of \cref{sec:auxprec}. Also, considering a variety of different choices for the interface space $\cF^H$ can lead to broader applicability of the mortar reformulation and improve its robustness in general settings. A possible continuation and extension of this work would be to study  different options for constructing $\cF^H$ that satisfy \eqref{eq:bdrapprox} or search for other conditions on $\cF^H$ that can provide the desired spectral equivalence properties demonstrated in this paper.

\bibliographystyle{plainurl}
\bibliography{references}

\begin{thebibliography}{10}

\bibitem{hypre}
{HYPRE: Scalable Linear Solvers and Multigrid Methods}.
\newblock URL:
  \url{http://computing.llnl.gov/projects/hypre-scalable-linear-solvers-multigrid-methods}.

\bibitem{metis}
{METIS: Graph Partitioning and Fill-reducing Matrix Ordering}.
\newblock URL: \url{http://glaros.dtc.umn.edu/gkhome/views/metis}.

\bibitem{saamge}
{SAAMGE: Smoothed Aggregation Element-based Algebraic Multigrid Hierarchies and
  Solvers}.
\newblock URL: \url{http://github.com/LLNL/saamge}.

\bibitem{2011Smoothers}
A~Baker, R~Falgout, T~Kolev, and U~Yang.
\newblock {Multigrid Smoothers for Ultraparallel Computing}.
\newblock {\em SIAM J. Sci. Comput.}, 33(5):2864--2887, 2011.
\newblock \href {https://doi.org/10.1137/100798806}
  {\path{doi:10.1137/100798806}}.

\bibitem{1999Mortar}
Faker~Ben Belgacem.
\newblock {The Mortar finite element method with Lagrange multipliers}.
\newblock {\em Numer. Math.}, 84(2):173--197, dec 1999.
\newblock \href {https://doi.org/10.1007/s002110050468}
  {\path{doi:10.1007/s002110050468}}.

\bibitem{2005SaddleProblems}
Michele Benzi, Gene~H. Golub, and J{\"{o}}rg Liesen.
\newblock {Numerical solution of saddle point problems}.
\newblock {\em Acta Numer.}, 14:1--137, may 2005.
\newblock URL:
  \url{http://www.journals.cambridge.org/abstract_S0962492904000212}, \href
  {https://doi.org/10.1017/S0962492904000212}
  {\path{doi:10.1017/S0962492904000212}}.

\bibitem{1993MortarDD}
C~Bernardi, Y~Maday, and A~T Patera.
\newblock {\em {Domain Decomposition by the Mortar Element Method}}, volume 384
  of {\em NATO ASI Series C: Mathematical and Physical Sciences}, pages
  269--286.
\newblock Springer, Dordrecht, 1993.
\newblock \href {https://doi.org/10.1007/978-94-011-1810-1_17}
  {\path{doi:10.1007/978-94-011-1810-1_17}}.

\bibitem{2005Mortar}
Christine Bernardi, Yvon Maday, and Francesca Rapetti.
\newblock {Basics and some applications of the mortar element method}.
\newblock {\em GAMM-Mitteilungen}, 28(2):97--123, 2005.
\newblock URL:
  \url{https://onlinelibrary.wiley.com/doi/abs/10.1002/gamm.201490020}, \href
  {https://doi.org/10.1002/gamm.201490020} {\path{doi:10.1002/gamm.201490020}}.

\bibitem{BrennerFEM}
Susanne~C Brenner and L~Ridgway Scott.
\newblock {\em {The Mathematical Theory of Finite Element Methods}}, volume~15
  of {\em Texts in Applied Mathematics}.
\newblock Springer, New York, 3rd edition, 2008.

\bibitem{2012ConvSAAMG}
Marian Brezina, Petr Van{\v{e}}k, and Panayot~S Vassilevski.
\newblock {An improved convergence analysis of smoothed aggregation algebraic
  multigrid}.
\newblock {\em Numer. Linear Algebr. with Appl.}, 19(3):441--469, 2012.
\newblock URL: \url{https://onlinelibrary.wiley.com/doi/abs/10.1002/nla.775},
  \href {https://doi.org/10.1002/nla.775} {\path{doi:10.1002/nla.775}}.

\bibitem{2012SAAMGe}
Marian Brezina and Panayot~S Vassilevski.
\newblock {Smoothed Aggregation Spectral Element Agglomeration AMG:
  SA-$\rho$AMGe}.
\newblock In Ivan Lirkov, Svetozar Margenov, and Jerzy Wa{\'{s}}niewski,
  editors, {\em Large-Scale Sci. Comput.}, pages 3--15, Berlin, Heidelberg,
  2012. Springer Berlin Heidelberg.

\bibitem{2003AMGe}
T~Chartier, R~Falgout, V~Henson, J~Jones, T~Manteuffel, S~McCormick, J~Ruge,
  and P~Vassilevski.
\newblock {Spectral AMGe ($\rho$AMGe)}.
\newblock {\em SIAM J. Sci. Comput.}, 25(1):1--26, 2003.
\newblock \href {https://doi.org/10.1137/S106482750139892X}
  {\path{doi:10.1137/S106482750139892X}}.

\bibitem{2007AMGe}
Timothy Chartier, Robert Falgout, Van~Emden Henson, Jim~E Jones, Tom~A
  Manteuffel, John~W Ruge, Steve~F McCormick, and Panayot~S Vassilevski.
\newblock {Spectral Element Agglomerate AMGe}.
\newblock In Olof~B Widlund and David~E Keyes, editors, {\em Domain Decompos.
  Methods Sci. Eng. XVI}, pages 513--521, Berlin, Heidelberg, 2007. Springer
  Berlin Heidelberg.

\bibitem{MSthesis}
Delyan Kalchev.
\newblock {Adaptive Algebraic Multigrid for Finite Element Elliptic Equations
  with Random Coefficients}.
\newblock Technical report, LLNL-TR-553254, Lawrence Livermore National
  Laboratory, Livermore, CA, 2012.
\newblock URL: \url{https://e-reports-ext.llnl.gov/pdf/594392.pdf}.

\bibitem{IPpreconditioner}
Delyan~Z Kalchev and Panayot~S Vassilevski.
\newblock {Auxiliary Space Preconditioning of Finite Element Equations Using a
  Nonconforming Interior Penalty Reformulation and Static Condensation}.
\newblock {\em SIAM J. Sci. Comput.}, 42(3):A1741--A1764, 2020.
\newblock \href {https://doi.org/10.1137/19M1286815}
  {\path{doi:10.1137/19M1286815}}.

\bibitem{2001MortarMultipliers}
C~Kim, R~Lazarov, J~Pasciak, and P~Vassilevski.
\newblock {Multiplier Spaces for the Mortar Finite Element Method in Three
  Dimensions}.
\newblock {\em SIAM J. Numer. Anal.}, 39(2):519--538, 2001.
\newblock \href {https://doi.org/10.1137/S0036142900367065}
  {\path{doi:10.1137/S0036142900367065}}.

\bibitem{2012xinv}
Johannes~K Kraus, Panayot~S Vassilevski, and Ludmil~T Zikatanov.
\newblock {Polynomial of best uniform approximation to $1/x$ and smoothing in
  two-level methods}.
\newblock {\em Comput. Methods Appl. Math.}, 12(4):448--468, 2012.
\newblock \href {https://doi.org/10.2478/cmam-2012-0026}
  {\path{doi:10.2478/cmam-2012-0026}}.

\bibitem{2008AMGe}
Ilya Lashuk and Panayot~S Vassilevski.
\newblock {On some versions of the element agglomeration AMGe method}.
\newblock {\em Numer. Linear Algebr. with Appl.}, 15(7):595--620, 2008.
\newblock URL: \url{https://onlinelibrary.wiley.com/doi/abs/10.1002/nla.585},
  \href {https://doi.org/10.1002/nla.585} {\path{doi:10.1002/nla.585}}.

\bibitem{2007DD}
Sergey Nepomnyaschikh.
\newblock {Domain Decomposition Methods}.
\newblock In Johannes Kraus and Ulrich Langer, editors, {\em Lect. Adv. Comput.
  Methods Mech.}, volume~1 of {\em Radon Series on Computational and Applied
  Mathematics}, pages 89--159. Walter de Gruyter, Berlin, 2007.

\bibitem{DDTheoryAlgorithms}
Andrea Toselli and Olof Widlund.
\newblock {\em {Domain Decomposition Methods -- Algorithms and Theory}},
  volume~34 of {\em Springer Series in Computational Mathematics}.
\newblock Springer, Berlin, Heidelberg, 2005.

\bibitem{1999TGElasticity}
P~Vanek, M~Brezina, and R~Tezaur.
\newblock {Two-grid Method for Linear Elasticity on Unstructured Meshes}.
\newblock {\em SIAM J. Sci. Comput.}, 21(3):900--923, 1999.
\newblock \href {https://doi.org/10.1137/S1064827596297112}
  {\path{doi:10.1137/S1064827596297112}}.

\bibitem{2002AMGeTopology}
Panayot~S Vassilevski.
\newblock {Sparse matrix element topology with application to AMG(e) and
  preconditioning}.
\newblock {\em Numer. Linear Algebr. with Appl.}, 9(6-7):429--444, 2002.
\newblock URL: \url{https://onlinelibrary.wiley.com/doi/abs/10.1002/nla.300},
  \href {https://doi.org/10.1002/nla.300} {\path{doi:10.1002/nla.300}}.

\bibitem{VassilevskiMG}
Panayot~S Vassilevski.
\newblock {\em {Multilevel Block Factorization Preconditioners: Matrix-based
  Analysis and Algorithms for Solving Finite Element Equations}}.
\newblock Springer, New York, 2008.

\bibitem{DDDiscretizationSolvers}
Barbara~I Wohlmuth.
\newblock {\em {Discretization Methods and Iterative Solvers Based on Domain
  Decomposition}}, volume~17 of {\em Lecture Notes in Computational Science and
  Engineering}.
\newblock Springer, Berlin, Heidelberg, 2001.

\bibitem{1996AuxiliarySpace}
J~Xu.
\newblock {The auxiliary space method and optimal multigrid preconditioning
  techniques for unstructured grids}.
\newblock {\em Computing}, 56(3):215--235, sep 1996.
\newblock \href {https://doi.org/10.1007/BF02238513}
  {\path{doi:10.1007/BF02238513}}.

\end{thebibliography}

\end{document}